\title[Representation Stability and Finite Orthogonal Groups]{Representation Stability and Finite Orthogonal groups}
\author[A.S. Kannan]{Arun S. Kannan}
\author[Zifan Wang]{Zifan Wang}
\address{Department of Mathematics, Massachusetts Institute of Technology, Cambridge, MA 02139}
\email{akannan@mit.edu}
\address{Princeton International School of Mathematics and Science, Princeton, NJ 08540}
\email{zifanawang04@gmail.com}
\begin{document}

\begin{abstract}
In this paper, we prove stability results about orthogonal groups over finite commutative rings where 2 is a unit. Inspired by Putman and Sam (2017), we construct a category $\OrI(R)$ and prove a Noetherianity theorem for the category of $\OrI(R)$-modules. This implies an asymptotic structure theorem for orthogonal groups. In addition, we show general homological stability theorems for orthogonal groups, with both untwisted and twisted coefficients, partially generalizing a result of Charney (1987).
\end{abstract}

\maketitle

\setcounter{tocdepth}{1}
\tableofcontents

\section{Introduction}

\subsection{A motivating example}


Consider a compact oriented manifold $X$ with nonempty boundary. The configuration space of $n$ points in $X$ is defined as the space of all possible choices of $n$ distinct points in $X$:
\[\mathrm{Conf}_n(X) = X^n \big\backslash \{(x_1,x_2,\dots,x_n) \mid x_i = x_j \text{ for some } i\neq j\}.\]
There is a natural action of the symmetric group $S_n$ on $\mathrm{Conf}_n(X)$ by permuting the coordinates. Note that there exist natural maps $\mathrm{Conf}_{n+1}(X) \to \mathrm{Conf}_{n}(X)$ by forgetting the $(n+1)$th coordinate. These maps induce morphisms at each level of cohomology, with coefficients in a fixed field $\KK$ (or more generally a Noetherian ring) with $\operatorname{char} \KK = 0$: \[H^i(\mathrm{Conf}_{n}(X); \KK) \to H^i(\mathrm{Conf}_{n+1}(X); \KK).\]

Denote by $\mathrm{UConf}_n(X)$ the unordered configuration space, which is the orbit space of the action of $S_n$ on $\mathrm{Conf}_n(X)$. In the unordered case, it is a classical result \cite{McDuff75} that for any fixed index $i$, $H^i(\mathrm{UConf}_{n}(X); \KK) \cong H^i(\mathrm{UConf}_{n+1}(X); \KK)$ for all $n\gg 0$. For $\mathrm{Conf}_n(X)$, however, this does not hold. Instead, we have \emph{representation stability}: notice that the action of $S_n$ on $\mathrm{Conf}_n(X)$ induces a representation of $S_n$ on $V_n := H^i(\mathrm{Conf}_{n}(X); \KK)$, and therefore $V_n$ splits into the direct sum of irreducible representations, which are naturally parametrized by partitions of $n$. For any partition $\lambda$ of a positive integer $k$, we write $c_{\lambda}(V_n)$ to denote the multiplicity inside the expansion of $V_n$ of the irreducible representation corresponding to the partition $(n-k, \lambda)$ of $n$. In \cite{Church12}, it was shown that there exists $N$ such that the following three properties hold for all $n>N$:
\begin{enumerate}
    \item Injectivity: the maps $V_n\to V_{n+1}$ are injective;
    \item Surjectivity: the image of $V_n$ spans $V_{n+1}$ as a $\KK[S_{n+1}]$-module;
    \item Multiplicity stability: for any $k$ and any partition $\lambda$ of $k$, $c_{\lambda}(V_n) = c_{\lambda}(V_{n+1})$.
\end{enumerate}

A direct corollary is that the dimensions of $V_n$ exhibit polynomial growth as $n\gg 0$.

\subsection{Background, history, and known results}

In general, the notion of representation stability considers a sequence of representations $V_n$ of a family of groups $G_n$, with maps $V_n\to V_{n+1}$ and $G_n\to G_{n+1}$ between them that are compatible with the action of $G_n$ on $V_n$. This framework was first introduced in \cite{Church_2013} to describe the frequent observation that various representation-theoretic properties of $V_n$ stabilize for sufficiently large $n$. Stability came to possess a very broad meaning: one example is homological stability, which holds for $\mathrm{UConf}_n(X)$ as discussed above. Another example is multiplicity stability, which holds for $\mathrm{Conf}_n(X)$. In $\mathsection \ref{asymp}$ of this paper, we prove for orthogonal groups a categorical version of representation stability, phrased in terms of Kan extensions.

A framework was developed in \cite{Church_2015}, involving the functor category of modules over a category named $\mathbf{FI}$. This category of $\mathbf{FI}$-modules carries the rich structure of an abelian category, and it acts as a large algebraic structure which altogether controls the growth behavior of a sequence of representations, thus providing a fundamental explanation of various stability behavior associated with representations of the symmetric groups $S_n$.

The objects of the category $\mathbf{FI}$ are \underline{f}inite sets, and the morphisms between them are \underline{i}njections (hence the name FI). The reason that $\mathbf{FI}$ plays a prominent role in the representation stability of symmetric groups is because the automorphism group of an $n$-element object in $\mathbf{FI}$ is precisely $S_n$. Key to the utility of $\mathbf{FI}$-modules is a certain Noetherian property that roughly states that any submodule of a finitely generated $\mathbf{FI}$-module is also finitely generated. This local Noetherianity property is a key ingredient in \cite{Church_2015}'s proof of representation stability of $\mathrm{Conf}_n(X)$.

In \cite{Putman_2017}, analogues ($\mathbf{VIC}$-modules, $\mathbf{SI}$-modules) to the functor category of $\mathbf{FI}$-modules were constructed by replacing the symmetric groups with the general linear groups and the symplectic groups (over finite rings). Similar Noetherian properties and asymptotic structure theorems were proven, as well as broad homological stability theorems with twisted coefficients. Some of these results were strengthened in \cite{miller2020quantitative}, where an explicit bound for the stability degree was shown.

\subsection{Main results}

Our motivation for this paper was that among the classical groups, only the orthogonal groups were not studied much in the context of representation stability. The main obstacle in this case stems from the fact that, unlike alternating forms and symplectic groups, more than one symmetric bilinear forms exist (up to isometry), even for modules over finite rings.

In this paper, we address this problem for orthogonal groups over finite commutative rings $R$ where 2 is a unit. Extending methods in \cite{Putman_2017}, we construct a category $\OrI(R)$ and the functor category of $\OrI(R)$-modules, and show a Noetherian property (Theorem \ref{OrI_noetherian}) analogous to that of $\mathbf{FI}$-modules. We then prove that this implies the analogous version of the asymptotic structure theorem for orthogonal groups (Theorems \ref{surj}, \ref{central} and Corollary \ref{inj}).

In addition, applying our Theorem \ref{OrI_noetherian}, we showed general homological stability theorems (Theorem \ref{twisted}) with twisted coefficients for (indefinite) orthogonal groups. Interestingly, these results yield reverse implications for the usual stability with untwisted coefficients, partially generalizing a result in \cite{charney1987}. It is possible that our results also have interesting implications in the context of mapping class groups of high-dimensional manifolds.

\subsection{Roadmap}

This paper is structured as follows. In $\mathsection \ref{prelim}$, we review the framework of functor categories of modules over a category, recall the definition of column-adapted maps, and recount the theory of symmetric bilinear forms over finite commutative rings. In $\mathsection \ref{noether}$, we define the category $\OrI(R)$ and the category of $\OrI(R)$-modules, and show that the latter is locally Noetherian. In $\mathsection \ref{asymp}$, we show the asymptotic structure theorem, which is a strong stability result for finitely generated $\OrI(R)$-modules. Finally, in $\mathsection \ref{twist}$, we show homological stability with twisted coefficients (coefficients determined by some finitely generated $\OrI(R)$-module), as well as its interesting reverse implication for stability with untwisted coefficients.

\subsection*{Acknowledgments}

This paper is the result of PRIMES-USA, a program in which high school students (the second author) engage in research-level mathematics led by a mentor (the first author). This paper is based upon work supported by The National Science Foundation Graduate Research Fellowship Program under Grant No.~1842490 awarded to the first author. The authors would like to thank the organizers of the PRIMES-USA program for providing the opportunity for this research. We are also grateful to Dr. Tanya Khovanova and Dr. Kent Vashaw for their helpful comments and to Prof. Steven V. Sam for suggesting this research topic and providing valuable suggestions. 

\section{Preliminaries}

\label{prelim}

\subsection{Finitely generated $\mathcal{C}$-modules}

We begin by reviewing the necessary framework of functor categories of modules over a category.

\begin{Def}
  Let $\Cl$ be a category and $\KK$ a ring. A \emph{$\Cl$-module over $\KK$} is a functor $M: \Cl \rightarrow \mathbf{Mod}_\KK$, where $\mathbf{Mod}_\KK$ is the category of $\KK$-modules. If the ring $\KK$ is clear from the context, we shall just use the term \emph{$\Cl$-module}. A \emph{$\Cl$-module homomorphism} $\eta: M \rightarrow N$ between two $\Cl$-modules $M$ and $N$ is a natural transformation of functors. The category of all $\Cl$-modules forms a category, which we call $\Rep_{\KK}(\Cl)$.
\end{Def}

A $\Cl$-module homomorphism $\eta: M\to N$ is \emph{injective} (resp. \emph{surjective}) if for each object $C\in \Cl$, the component $\eta_C: M(C)\to N(C)$ is injective (resp. surjective). We say $N$ is a \emph{submodule} (resp. \emph{quotient module}) of $M$ if there is an injective (resp. surjective) homomorphism $N \to M$ (resp. $M\to N$). It is a well-known fact that concepts such as subobjects, quotients, kernels, cokernels, images, direct sums, etc. can all be defined in this ``pointwise'' fashion in the context of $\Cl$-modules. In other words, $\Rep_{\KK}(\Cl)$ has the structure of an abelian category.
 
One of the key ingredients in \cite{Church_2015} is the notion of a Noetherianity property. Recall that a module over a ring $\KK$ is Noetherian if every submodule is finitely generated (assuming the axiom of choice). The following definitions generalize these notions to $\Cl$-modules.

\begin{Def} \label{def_fin_gen}
A $\Cl$-module $M$ is \emph{finitely generated} if there exist objects $C_1, C_2, \dots C_n \in \Cl$ and elements $x_i \in M(C_i)$ for each $i$, satisfying that if $N$ is a submodule of $M$ such that $N(C_i)$ contains $x_i$, then $N = M$. The set $\{x_i\}$ is called the \emph{generating set} of $M$.
\end{Def}

\begin{Def}
A $\Cl$-module $M$ is \emph{Noetherian} if every submodule of $M$ is finitely generated. The category of $\Cl$-modules is \emph{locally Noetherian} if for any Noetherian ring $\KK$, all finitely generated $\Cl$-modules are Noetherian.
\end{Def}

An equivalent formulation of Definition \ref{def_fin_gen} is occassionally useful. For any object $X \in \Cl$, let $P_{\Cl, X}$ denote the covariant representable $\Cl$-module generated at $X$, i.e. the functor defined by
\begin{align*}
P_{\Cl, X}: \Cl &\to \textbf{Mod}_{\KK} \\
Y &\to \KK[\Hom_{\Cl}(X,Y)]
\end{align*}
for all $Y \in \Cl$. Then the following lemma holds:

\begin{lem}
\label{fingen_quotient}
A $\Cl$-module is finitely generated if and only if it is a quotient of a direct sum of modules of the form $P_{\Cl, X}$.
\end{lem}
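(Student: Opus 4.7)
The plan is to prove both directions by using the Yoneda-style identification $\mathrm{Hom}_{\Rep_{\KK}(\Cl)}(P_{\Cl, X}, M) \cong M(X)$, where the bijection sends a natural transformation $\varphi: P_{\Cl, X} \to M$ to the element $\varphi_X(\mathrm{id}_X) \in M(X)$. Concretely, given $x \in M(X)$, the corresponding natural transformation is determined by sending a basis element $f \in \Hom_{\Cl}(X, Y) \subseteq P_{\Cl, X}(Y)$ to $M(f)(x) \in M(Y)$, and extending $\KK$-linearly. This identification will do essentially all of the work in both directions.

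For the forward direction, suppose $M$ is finitely generated with generating set $\{x_i \in M(C_i)\}_{i=1}^n$. Each $x_i$ produces, by the identification above, a $\Cl$-module homomorphism $\varphi_i : P_{\Cl, C_i} \to M$, and these assemble into a single map $\varphi : \bigoplus_{i=1}^n P_{\Cl, C_i} \to M$. Let $N \subseteq M$ be the image of $\varphi$, which is a submodule since $\Rep_{\KK}(\Cl)$ is abelian. By construction $N(C_i) \ni \varphi_i(\mathrm{id}_{C_i}) = x_i$, so the definition of finitely generated (Definition \ref{def_fin_gen}) forces $N = M$; equivalently, $\varphi$ is surjective and $M$ is a quotient of $\bigoplus_i P_{\Cl, C_i}$.

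For the backward direction, suppose $\pi : \bigoplus_{i \in I} P_{\Cl, C_i} \twoheadrightarrow M$ is a surjection from a (finite) direct sum. Define $x_i := \pi_{C_i}(\mathrm{id}_{C_i}) \in M(C_i)$. I need to check that $\{x_i\}$ generates $M$ in the sense of Definition \ref{def_fin_gen}. Let $N \subseteq M$ be any submodule with $x_i \in N(C_i)$ for each $i$. For any object $Y \in \Cl$, every element of $M(Y)$ is the image under $\pi_Y$ of a $\KK$-linear combination of morphisms $f : C_i \to Y$, and by naturality $\pi_Y(f) = M(f)(\pi_{C_i}(\mathrm{id}_{C_i})) = M(f)(x_i)$. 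Since $N$ is a submodule, $M(f)(x_i) \in N(Y)$, so $N(Y) = M(Y)$ for all $Y$, i.e. $N = M$.

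Neither direction presents a genuine obstacle; the only subtlety is keeping track of what ``finite'' means. In Definition \ref{def_fin_gen} the generating family is explicitly finite, so the direct sum in the statement should be interpreted as finite as well (otherwise the backward direction would be vacuous). The forward step needs the generating family indexed by a finite set, and the backward step needs to assume the direct sum is finite in order to conclude finite generation; these conventions match and the argument goes through cleanly.
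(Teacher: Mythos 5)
Your proof is correct and follows exactly the same route as the paper: the Yoneda identification $\mathrm{Hom}(P_{\Cl,X},M)\cong M(X)$, with the generators giving a surjection from a finite direct sum of representables in one direction and the images of the identities giving a generating set in the other. The paper merely leaves the verification as ``straightforward to check,'' which you have written out in full.
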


\begin{proof}
By the Yoneda lemma, a $\Cl$-module homomorphism $\eta: P_{\Cl, X} \rightarrow M$ is determined uniquely by choosing an element $x \in M(X)$ and letting $\eta_X(1_X) = x$. It is straightforward to check that if $M$ is finitely generated, then $M$ is a quotient of the direct sum of the representable functors attached to the generating set. Similarly, if $M$ is a quotient of this form, then the elements corresponding to each representable functor will be a generating set of $M$.
\end{proof}

Let $f:\Cl \to \mathcal{D}$ be a functor. This induces a functor $f^*: \Rep_{\KK}(\mathcal{D})\to\Rep_{\KK}(\mathcal{C})$. The functor $f$ is defined to be \emph{finite} if for every $X\in\mathcal{D}$, $f^*(P_{\mathcal{D},X})$ is finitely generated. We end this subsection by recalling the following two lemmas, which appeared respectively as Lemmas 2.1 and 2.2 in \cite{Putman_2017}.

\begin{lem} 
\label{Noetherian_condition}
Let $\Cl$ be a category. The category of $\Cl$-modules is locally Noetherian if and only if for any object $X\in\Cl$, any submodule of $P_{\Cl,X}$ is finitely generated.
\end{lem}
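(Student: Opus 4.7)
The plan is to use Lemma \ref{fingen_quotient} to reduce local Noetherianity of $\Rep_\KK(\Cl)$ to a statement about representables. The forward direction is essentially immediate: for any Noetherian ring $\KK$, the module $P_{\Cl,X}$ is finitely generated by the element $1_X \in \KK[\Hom_{\Cl}(X,X)]$ (by the Yoneda observation used in the proof of Lemma \ref{fingen_quotient}), so local Noetherianity forces $P_{\Cl,X}$ to be Noetherian, meaning every submodule of $P_{\Cl,X}$ is finitely generated.

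For the converse, I would fix a Noetherian ring $\KK$ and let $M$ be a finitely generated $\Cl$-module over $\KK$; the goal is to show every submodule of $M$ is finitely generated. Applying Lemma \ref{fingen_quotient}, pick a surjection $\pi: P \twoheadrightarrow M$ with $P = \bigoplus_{i=1}^n P_{\Cl,X_i}$. Given a submodule $N \subseteq M$, the preimage $\pi^{-1}(N) \subseteq P$ surjects onto $N$, so a finite generating set for $\pi^{-1}(N)$ yields one for $N$. Hence it suffices to prove that every submodule of a finite direct sum of representables is finitely generated.

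I would handle this by induction on $n$. The base case $n = 1$ is precisely the hypothesis. For the inductive step, let $N \subseteq P_1 \oplus (P_2 \oplus \cdots \oplus P_n)$ and consider the short exact sequence
\[ 0 \to N \cap P_1 \to N \to N/(N \cap P_1) \to 0. \]
The left-hand term is a submodule of $P_1 = P_{\Cl,X_1}$, hence finitely generated by hypothesis; the right-hand term embeds via projection into $P_2 \oplus \cdots \oplus P_n$, hence is finitely generated by the inductive hypothesis. Taking the union of a generating set for $N \cap P_1$ with lifts of generators for the quotient produces a finite generating set for $N$.

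The argument is routine abelian-category bookkeeping once Lemma \ref{fingen_quotient} is granted, and I would not expect any serious obstacle. The only point requiring attention is that intersections, quotients, kernels, and images of $\Cl$-modules must be interpreted pointwise so that the above short exact sequence is meaningful, which is justified by the abelian structure on $\Rep_\KK(\Cl)$. Conceptually, the lemma expresses that local Noetherianity of $\Rep_\KK(\Cl)$ is entirely concentrated in the submodule lattices of the representable $\Cl$-modules $P_{\Cl,X}$, which is exactly what makes the criterion useful in practice.
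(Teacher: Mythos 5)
Your argument is correct; note that the paper does not prove this lemma itself but cites it as Lemma 2.1 of \cite{Putman_2017}, and your route --- the forward direction via $P_{\Cl,X}$ being generated by $1_X$, and the converse by reducing through Lemma \ref{fingen_quotient} to finite direct sums of representables and inducting on the number of summands with the short exact sequence $0 \to N \cap P_1 \to N \to N/(N\cap P_1) \to 0$ --- is exactly the standard argument given there. No gaps.
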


\begin{lem} 
\label{Noetherian_transfer}
If the category of $\Cl$-modules is locally Noetherian, and $f: \Cl\to\mathcal{D}$ is a finite and essentially surjective functor, then the category of $\mathcal{D}$-modules is locally Noetherian.
\end{lem}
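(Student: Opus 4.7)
The plan is to apply Lemma~\ref{Noetherian_condition} to $\mathcal{D}$, reducing the claim to showing that for every $Y\in\mathcal{D}$, any submodule $N\subseteq P_{\mathcal{D},Y}$ is finitely generated. The first move is to pull back along $f$: the inclusion $N\hookrightarrow P_{\mathcal{D},Y}$ induces an inclusion $f^*(N)\hookrightarrow f^*(P_{\mathcal{D},Y})$ in $\Rep_{\KK}(\Cl)$. Since $f$ is finite, $f^*(P_{\mathcal{D},Y})$ is a finitely generated $\Cl$-module, and by local Noetherianity of $\Rep_{\KK}(\Cl)$ it is then Noetherian. Therefore $f^*(N)$ is finitely generated, say by elements $x_i\in f^*(N)(C_i)=N(f(C_i))$ for some objects $C_1,\dots,C_n\in\Cl$.

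The next step is to argue that the same elements $x_i$, now viewed inside the $\mathcal{D}$-module $N$, form a generating set for $N$ over $\mathcal{D}$. Suppose $N'\subseteq N$ is a submodule containing each $x_i$. Then $f^*(N')\subseteq f^*(N)$ contains the chosen generators of $f^*(N)$, so $f^*(N')=f^*(N)$; equivalently, $N'(f(X))=N(f(X))$ for every $X\in\Cl$. To extend this equality from the essential image of $f$ to all of $\mathcal{D}$, I would invoke essential surjectivity: for an arbitrary $Y'\in\mathcal{D}$ one has an isomorphism $\phi\colon f(X)\to Y'$ for some $X\in\Cl$, and both $N$ and $N'$ send $\phi$ to isomorphisms by functoriality. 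Chasing the naturality square of the inclusion $N'\hookrightarrow N$ then gives $N'(Y')=N(Y')$, whence $N'=N$.

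I do not anticipate a serious obstacle. The only mildly subtle point is the final transport step, and the role of essential surjectivity there is precisely to propagate the equality $f^*(N')=f^*(N)$, which a priori only controls values on objects of the form $f(X)$, to all of $\mathcal{D}$, using that functors respect isomorphisms. Everything else follows formally from Lemma~\ref{Noetherian_condition} and the definition of a finite functor.
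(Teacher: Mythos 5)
Your proof is correct, and since the paper only recalls this lemma from \cite{Putman_2017} (Lemma 2.2) without reproducing the argument, there is no in-text proof to diverge from; your argument is precisely the standard one: reduce via Lemma~\ref{Noetherian_condition} to submodules $N\subseteq P_{\mathcal{D},Y}$, use finiteness of $f$ plus local Noetherianity of $\Cl$-modules to finitely generate $f^*(N)$, and transport the generators back using essential surjectivity. The one subtle step --- propagating $f^*(N')=f^*(N)$ from the essential image of $f$ to all of $\mathcal{D}$ via the naturality square for an isomorphism $f(X)\to Y'$ --- is handled correctly.
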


\subsection{Semilocal rings and finite rings}

In this paper, we shall consider only finite commutative rings with unit. However, the literature of orthogonal forms often deal with semilocal rings, which are more general, so we briefly mention them here. Recall that a ring is \emph{Artinian} if there is no infinite descending chain of ideals, and a ring $R$ is \emph{semilocal} if $R/\operatorname{rad} R$ is Artinian. We recall an equivalent characterization of semilocal rings (c.f. \cite{lam2001first}):

\begin{prop}
A ring $R$ is semilocal if and only if it has finitely many maximal ideals.
\end{prop}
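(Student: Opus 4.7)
The plan is to exploit the bijective correspondence between maximal ideals of $R$ and maximal ideals of the quotient $R/\operatorname{rad} R$, which holds because the Jacobson radical is contained in every maximal ideal (indeed, it is their intersection, by definition). Under this correspondence, $R$ has finitely many maximal ideals if and only if $R/\operatorname{rad} R$ does, so it suffices to show that $R/\operatorname{rad} R$ is Artinian if and only if it has finitely many maximal ideals.

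For the direction ``finitely many maximal ideals implies semilocal,'' suppose $R$ has exactly the maximal ideals $\mathfrak{m}_1, \ldots, \mathfrak{m}_n$. These are pairwise comaximal, so the Chinese Remainder Theorem gives
\[
R/\operatorname{rad} R \;=\; R/(\mathfrak{m}_1 \cap \cdots \cap \mathfrak{m}_n) \;\cong\; \prod_{i=1}^n R/\mathfrak{m}_i,
\]
a finite product of fields. Any such product is semisimple, hence Artinian, so $R$ is semilocal.

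For the converse, I would prove the stronger assertion that every commutative Artinian ring $A$ has only finitely many maximal ideals, and then apply it to $A = R/\operatorname{rad} R$. Consider the set of ideals of $A$ expressible as a finite intersection of maximal ideals; by the descending chain condition it contains a minimal element $I = \mathfrak{m}_1 \cap \cdots \cap \mathfrak{m}_k$. If $\mathfrak{n}$ is any maximal ideal of $A$, then $\mathfrak{n} \cap I$ lies in the same set and sits inside $I$, so minimality forces $\mathfrak{n} \cap I = I$, i.e.\ $I \subseteq \mathfrak{n}$. Since $\mathfrak{n}$ is prime and contains the product $\mathfrak{m}_1 \cdots \mathfrak{m}_k \subseteq I$, it contains some $\mathfrak{m}_i$, and maximality forces $\mathfrak{n} = \mathfrak{m}_i$. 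Transferring this finiteness back through the bijection above completes the proof.

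No serious obstacle presents itself; the argument merely assembles the definition of $\operatorname{rad} R$, the Chinese Remainder Theorem, and the descending chain condition. The only mildly delicate point is extracting finiteness of the maximal spectrum from Artinianity alone, but the minimal-intersection argument handles it cleanly.
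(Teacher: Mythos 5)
Your argument is correct and complete. The paper itself gives no proof of this proposition --- it is simply recalled from the literature (Lam) --- so there is nothing to compare against; your route (the bijection of maximal ideals through $R/\operatorname{rad} R$, the Chinese Remainder Theorem for one direction, and the minimal-finite-intersection argument for the other) is the standard one and works as written. The only implicit hypothesis worth flagging is commutativity, which your use of CRT and primality relies on; this is harmless here since the paper restricts to commutative rings throughout.
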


Therefore, it is clear that a finite ring is semilocal. Furthermore, if $\mathfrak{p}$ is a prime ideal in a finite ring $R$, then $R/\mathfrak{p}$ is a finite integral domain and therefore a field. This implies that $\mathfrak{p}$ is maximal. Therefore, we have the following result (c.f. \cite{lam2001first}):

\begin{prop}\label{decomp}
A finite ring $R$ is the direct product of finite local rings.
\end{prop}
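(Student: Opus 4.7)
The plan is to prove Proposition \ref{decomp} via a Chinese Remainder Theorem argument, applied to the (finitely many) maximal ideals of $R$ raised to a suitable power so that their intersection vanishes. The paragraph preceding the proposition already observes two essential facts: $R$ is semilocal (so it has only finitely many maximal ideals $\mathfrak{m}_1, \dots, \mathfrak{m}_k$), and every prime ideal of $R$ is maximal. The second fact implies that the Jacobson radical $J = \mathfrak{m}_1 \cap \cdots \cap \mathfrak{m}_k$ coincides with the nilradical of $R$, so every element of $J$ is nilpotent. Since $R$ is finite, $J$ is finite, and one may choose a single exponent $n$ so that $x^n = 0$ for every $x \in J$; a short argument with a generating set then gives $J^n = 0$.

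Next, I would show that the ideals $\mathfrak{m}_1^n, \dots, \mathfrak{m}_k^n$ are pairwise coprime. For $i \ne j$, the maximality of $\mathfrak{m}_i$ and $\mathfrak{m}_j$ gives $\mathfrak{m}_i + \mathfrak{m}_j = R$, and a standard binomial-expansion argument upgrades this to $\mathfrak{m}_i^n + \mathfrak{m}_j^n = R$. Combined with coprimality, induction yields
\[
\mathfrak{m}_1^n \cap \cdots \cap \mathfrak{m}_k^n = \mathfrak{m}_1^n \cdots \mathfrak{m}_k^n \subseteq (\mathfrak{m}_1 \cap \cdots \cap \mathfrak{m}_k)^n = J^n = 0.
\]
The Chinese Remainder Theorem therefore produces an isomorphism
\[
R \;\xrightarrow{\;\sim\;}\; R/\mathfrak{m}_1^n \times \cdots \times R/\mathfrak{m}_k^n.
\]

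Finally, I would verify that each factor $R/\mathfrak{m}_i^n$ is a finite local ring. Finiteness is immediate since $R$ is finite. For the local property, note that the maximal ideals of $R/\mathfrak{m}_i^n$ correspond to maximal ideals of $R$ containing $\mathfrak{m}_i^n$; any such maximal ideal $\mathfrak{m}_j$ must contain $\mathfrak{m}_i$ (since $\mathfrak{m}_i$ is the radical of $\mathfrak{m}_i^n$), and then maximality forces $j = i$. Hence $R/\mathfrak{m}_i^n$ has the unique maximal ideal $\mathfrak{m}_i/\mathfrak{m}_i^n$, completing the decomposition.

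The only real obstacle in the argument is justifying $J^n = 0$ for some uniform $n$; the subtlety (trivial in this finite setting but essential in the general Artinian case) is upgrading pointwise nilpotence of $J$ to nilpotence of $J$ as an ideal. Everything else is formal manipulation with coprime ideals and the Chinese Remainder Theorem.
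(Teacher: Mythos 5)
Your argument is correct, but note that the paper does not actually prove this proposition: it is quoted from the literature (Lam's \emph{A First Course in Noncommutative Rings}), so there is no in-paper proof to compare against. What you have written is essentially the standard structure theorem for commutative Artinian rings specialized to the finite case, and every step is sound: finitely many maximal ideals, all primes maximal so the Jacobson radical $J$ is nil, some power of $J$ vanishes, pairwise coprimality of the $\mathfrak{m}_i^n$, CRT, and locality of each factor. The one place to be careful is your claim that the \emph{same} exponent $n$ with $x^n=0$ for all $x\in J$ gives $J^n=0$: the generating-set pigeonhole argument actually yields $J^{m(n-1)+1}=0$ when $J$ has $m$ generators, not $J^n=0$. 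This is harmless --- replace $n$ by any sufficiently large exponent, since coprimality of $\mathfrak{m}_i^N+\mathfrak{m}_j^N=R$ holds for every $N$ --- but as written the statement is slightly too strong. A cleaner route in the finite setting is to observe that the descending chain $J\supseteq J^2\supseteq\cdots$ stabilizes, so $J^n=J\cdot J^n$ for some $n$, whence $J^n=0$ by Nakayama. Either way, your proof supplies a complete justification for a fact the paper takes as known.
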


Therefore, given a finite commutative ring $R$, we can express it as the product of finite local rings $R = \prod_{i = 1}^n R_i$. Then, since each $R_i$ is local, there is a unique maximal ideal $\mathfrak{m}_i$ in each $R_i$, and this gives a projection map $\pi_i: R_i \rightarrow R_i/\mathfrak{m}_i$, the codomain of which is a field; in particular, the product map $\pi = \prod_{i=1}^{n} \pi_i$ gives a projection map from $R$ to a product of finite fields $R/\mathfrak{m}$, where $\mathfrak{m} = \prod_{i=1}^n \mathfrak{m}_i$.

\subsection{Symmetric bilinear forms}
We now wish to define and characterize symmetric bilinear forms on finite commutative rings. As revealed shortly, we need to assume that 2 is a unit.

\begin{Def}
Let $R$ be a semilocal ring, and let $V$ be a finite-rank free $R$-module. A bilinear form $B: V \times V \rightarrow R$ is called \emph{symmetric} or \emph{orthogonal} if $B(v,w) = B(w,v)$ for all $v, w$. The form is said to be \emph{non-degenerate} if it it induces an isomorphism to the dual space $V^* = \Hom_R(V, R)$. If $B$ is non-degenerate, call the pair $(V, B)$ an \emph{orthogonal module}. If $(V, B_V)$ and $(W, B_W)$ are two orthogonal modules, an $R$-module homomorphism $\phi: V \rightarrow W$ is called an \emph{isometry} if $B_V(v, w) = B_W(\phi(v), \phi(w))$ for all $v, w \in V$.
\end{Def}

From the definition, it follows that isometries are necessarily injective.

The classification of orthogonal modules over a finite ring up to bijective isometry is more difficult than the symplectic case. We first recall the following diagonalization theorem (c.f. \cite{baeza2006quadratic}):

\begin{prop}\label{bilformdiag}
Let $R$ be a semilocal ring, and let $V$ be an orthogonal $R$-module. Then there exists a basis of $V$ in which the matrix of $B$ is diagonal and whose diagonal entries are units in $R$.
\end{prop}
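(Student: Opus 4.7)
The plan is to induct on $n = \operatorname{rank}_R V$. The key lemma — whose proof is the main obstacle — is that there exists $v \in V$ with $B(v,v) \in R^\times$. Granting this, the linear functional $B(v, -)$ sends $v$ to a unit, so $v$ is unimodular and extends to a basis $\{v, e_2, \ldots, e_n\}$ of $V$. Replacing each $e_i$ with $f_i := e_i - B(v,v)^{-1} B(v, e_i)\, v$ produces a new basis $\{v, f_2, \ldots, f_n\}$ in which each $f_i$ is orthogonal to $v$, so $V = Rv \oplus v^\perp$ with $v^\perp$ the free module on $\{f_2, \ldots, f_n\}$. In this basis the Gram matrix of $B$ is block-diagonal; since the full matrix is invertible, the block corresponding to $v^\perp$ is invertible as well, and so $B|_{v^\perp}$ is non-degenerate. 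Applying the induction hypothesis to $(v^\perp, B|_{v^\perp})$ gives a diagonal basis of $v^\perp$ with unit entries, which together with $v$ yields the desired basis of $V$.

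To prove the key lemma, reduce modulo the Jacobson radical $J = \operatorname{rad}(R)$. Since $R$ is commutative semilocal, $\bar R := R/J$ is a finite product of fields (in our finite-ring case this is a consequence of Proposition \ref{decomp}). Setting $\bar V := V/JV$, the induced form $\bar B$ on $\bar V$ is still non-degenerate: tensoring the isomorphism $V \xrightarrow{\sim} V^*$ induced by $B$ with $\bar R$ gives an isomorphism $\bar V \xrightarrow{\sim} \bar V^*$, using that $V$ is finitely generated projective so that $V^* \otimes_R \bar R \cong \bar V^*$. Writing $\bar R = \prod_i k_i$ with each $k_i$ a field (of characteristic $\neq 2$ since $2 \in R^\times$), $\bar V$ decomposes correspondingly as an orthogonal direct product of non-degenerate orthogonal modules over the $k_i$. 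In each factor, the polarization identity
\[
\bar B(x,y) = \tfrac{1}{2}\bigl(\bar B(x+y, x+y) - \bar B(x,x) - \bar B(y,y)\bigr)
\]
shows that if every vector were isotropic then $\bar B$ would vanish identically, contradicting non-degeneracy; this is the unique place where the hypothesis $2 \in R^\times$ is needed. Hence each factor admits $\bar v_i$ with $\bar B(\bar v_i, \bar v_i) \in k_i^\times$, and combining the $\bar v_i$ gives $\bar v \in \bar V$ with $\bar B(\bar v, \bar v) \in \bar R^\times$. Lifting $\bar v$ to any $v \in V$, the element $B(v,v)$ reduces mod $J$ to a unit; since the preimage of $\bar R^\times$ under the projection $R \to \bar R$ consists entirely of units of $R$, we conclude $B(v,v) \in R^\times$, completing the proof of the key lemma and hence of the proposition.
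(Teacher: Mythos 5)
The paper gives no proof of this proposition---it is quoted from the literature (Baeza's book on quadratic forms over semilocal rings)---so there is nothing to compare line by line; your argument is essentially the standard one and it is correct. The structure is right: the polarization identity (using $2\in R^{\times}$, which the proposition's statement omits but which is a standing hypothesis of the subsection) produces an anisotropic vector over each residue field, the Chinese Remainder decomposition of $R/\operatorname{rad}(R)$ assembles these into a vector whose norm is a unit modulo the radical, the usual ``unit mod $\operatorname{rad}(R)$ implies unit'' fact lifts it, and Gram--Schmidt splits off a unit-discriminant rank-one summand so induction applies. The one step you pass over too quickly is the assertion that $v$ ``is unimodular and extends to a basis'': over an arbitrary commutative ring a unimodular element of a free module need not extend to a basis (its complement is only finitely generated projective of constant rank $n-1$), so you are implicitly invoking the fact that finitely generated projectives of constant rank over a commutative semilocal ring are free (equivalently, that such rings have stable range one). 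That fact is true and standard, and in the paper's actual setting of finite commutative rings it follows immediately from Proposition \ref{decomp} together with freeness of projectives over local rings, but it is the one place where semilocality is used beyond $R/\operatorname{rad}(R)$ being a product of fields, and it deserves a sentence.
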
 

In other words, we can find a bijective isometry from $(V, B)$ to $(R^{\rk V}, D)$, where $D$ is a diagonal form as in the theorem. (Here, $\rk V$ denotes the rank of a free $R$-module $V$.) While this theorem greatly simplifies the classification problem, it it still redundant (for instance, permuting basis vectors in $R^{\rk V}$ will change $D$ but the resulting module is still isometric). In the case where $R$ is a finite field, the answer is well-known (though a proof is hard to find, c.f. \cite{glasser05}):

\begin{prop}\label{bilformfields}
Let $\mathbb{F}$ be a finite field (of characteristic $p > 2$), and let $(V,B)$ be an orthogonal $\mathbb{F}$-module (i.e. a finite-dimensional vector space endowed with a non-degenerate symmetric bilinear form). Then, there exists a basis of $V$ such that matrix of $B$ is either 1) the identity matrix, or 2) the diagonal matrix $diag(1, \dots, 1, x)$, where $x$ is any nonsquare in $\mathbb{F}^\times$, where different choices of $x$ yield isometric forms.
\end{prop}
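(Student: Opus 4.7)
The plan is to reduce the problem in two stages: first diagonalize and normalize, then use a two-by-two reduction to collapse pairs of nonsquares. First, I would apply Proposition \ref{bilformdiag} to obtain a basis in which the matrix of $B$ is $\mathrm{diag}(a_1, \ldots, a_n)$ with each $a_i \in \mathbb{F}^\times$. Rescaling the $i$-th basis vector by $c \in \mathbb{F}^\times$ replaces $a_i$ by $c^2 a_i$, so each $a_i$ is determined up to this rescaling only by its class in $\mathbb{F}^\times / (\mathbb{F}^\times)^2$. Since $p > 2$, the group $\mathbb{F}^\times$ has even order and this quotient has exactly two elements, so we may assume each $a_i \in \{1, x\}$ for any fixed nonsquare $x$.

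The crucial step is to show that $\mathrm{diag}(x,x) \cong \mathrm{diag}(1,1)$, so we can pair off nonsquare entries. Concretely, in the form $B((u_1,u_2),(v_1,v_2)) = x(u_1v_1 + u_2v_2)$, I want two vectors $(a,b)$ and $(c,d)$ with $B((a,b),(a,b)) = B((c,d),(c,d)) = 1$ and $B((a,b),(c,d)) = 0$. Taking $(c,d) = (-b,a)$ immediately gives orthogonality and equal norms, so it suffices to solve $a^2 + b^2 = x^{-1}$. This reduces to the sum-of-two-squares lemma for finite fields of odd characteristic: the sets $\{a^2 : a \in \mathbb{F}\}$ and $\{y - b^2 : b \in \mathbb{F}\}$ each have cardinality $(|\mathbb{F}|+1)/2$, so they must intersect in $\mathbb{F}$, giving $a^2 + b^2 = y$ for any target $y \in \mathbb{F}$ (in particular $y = x^{-1}$).

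With this two-by-two reduction in hand, I can iteratively replace each pair of diagonal $x$'s by a pair of $1$'s. The diagonal is therefore isometric to one with at most one $x$, giving exactly the two canonical forms $\mathrm{diag}(1,\dots,1)$ and $\mathrm{diag}(1,\dots,1,x)$. For the final claim that different nonsquares $x$ yield isometric forms: any two nonsquares $x, x'$ satisfy $x = c^2 x'$ for some $c \in \mathbb{F}^\times$ (since $\mathbb{F}^\times/(\mathbb{F}^\times)^2$ has order $2$), and rescaling the last basis vector by $c$ converts one form to the other.

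The main obstacle is the two-by-two identification $\mathrm{diag}(x,x) \cong \mathrm{diag}(1,1)$, which rests on the sum-of-two-squares counting argument; everything else is bookkeeping with rescalings and with the structure of $\mathbb{F}^\times / (\mathbb{F}^\times)^2$. (One could optionally note that the two cases are genuinely distinct by the determinant invariant in $\mathbb{F}^\times/(\mathbb{F}^\times)^2$, but the statement as written does not require this.)
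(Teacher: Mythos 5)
Your proof is correct. Note that the paper does not actually prove Proposition \ref{bilformfields}; it is stated as a well-known classification and attributed to the literature (the reference \cite{glasser05}), so there is no in-paper argument to compare against. Your route is the standard one: diagonalize, normalize each entry to $1$ or a fixed nonsquare $x$ via rescaling by squares, and then collapse $\mathrm{diag}(x,x)$ to $\mathrm{diag}(1,1)$ using the sum-of-two-squares counting argument, with the rotation-type pair $(a,b)$, $(-b,a)$ supplying both orthogonality and equal norms (and linear independence, since the resulting Gram matrix is nonsingular). All steps check out, including the final observation that any two nonsquares differ by a square factor, so the two canonical forms in the statement are well defined.
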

In other words, there are two isomorphism classes, and the dimension of $V$ and the determinant of $B$ determine the isomorphism class.

A similar result can be proven for a finite local ring. If $R$ is a finite local ring, let $\pi: R \rightarrow \mathbb{F}$ denote the projection onto its residue field $\mathbb{F} = R/\mathfrak{m}$, where $\mathfrak{m}$ is the maximal ideal in $R$. Then we have the following well-known proposition (we provide a proof because we could not find a proof in the literature):

\begin{prop}\label{bilformlocalrings}
Let $R$ be a finite local ring (where $2$ is a unit), and let $(V,B)$ be an orthogonal $R$-module. Then, there exists a basis of $V$ such that matrix of $B$ is either 1) the identity matrix, or 2) the diagonal matrix $diag(1, \dots, 1, x)$, where $x \in R$ is such that $\pi(x)$ is a nonsquare in $\mathbb{F}^\times$, and where different choices of $x$ yield isometric forms.
\end{prop}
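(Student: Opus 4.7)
The plan is to mirror the classification over a finite field (Proposition \ref{bilformfields}) by first diagonalizing and then combining entries modulo squares, with Hensel-type lifting used to transport the finite-field arguments to $R$. Starting from Proposition \ref{bilformdiag}, I may assume $B = \mathrm{diag}(a_1,\ldots,a_n)$ with all $a_i \in R^\times$, and the problem reduces to understanding $R^\times$ modulo squares and to establishing an analogue of the classical ``representation of $1$'' lemma.

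The first key fact I would prove is that the reduction map induces an isomorphism $R^\times/(R^\times)^2 \xrightarrow{\sim} \mathbb{F}^\times/(\mathbb{F}^\times)^2$. This rests on two observations: (a) $\pi\colon R^\times \to \mathbb{F}^\times$ is surjective with kernel $1+\mathfrak{m}$, and (b) every element of $1+\mathfrak{m}$ is a square. For (b), since $R$ is finite the ideal $\mathfrak{m}$ is nilpotent, and since $2 \in R^\times$ the formal binomial series $(1+m)^{1/2} = \sum_{k \geq 0} \binom{1/2}{k} m^k$ terminates and yields a bona fide square root in $R$. Lifting square roots from $\mathbb{F}^\times$ then gives the isomorphism. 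Consequently, any $a \in R^\times$ is, up to a square factor, either $1$ or a fixed element $\alpha \in R^\times$ with $\pi(\alpha)$ a nonsquare.

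The second key ingredient is a representation lemma: for any $a,b \in R^\times$, there exist $x,y \in R$ with $ax^2 + by^2 = 1$. Reducing mod $\mathfrak{m}$, the classical counting argument over the finite field $\mathbb{F}$ of odd characteristic yields $\bar{x}_0,\bar{y}_0 \in \mathbb{F}$ solving $\bar{a}\bar{x}_0^2+\bar{b}\bar{y}_0^2 = 1$, with at least one nonzero. Lifting to $x_0,y_0 \in R$ with, say, $x_0 \in R^\times$, one obtains $ax_0^2 + by_0^2 = 1+m$ for some $m \in \mathfrak{m}$. Then $(1-by_0^2)/(ax_0^2) = 1 - m/(ax_0^2) \in 1+\mathfrak{m}$, so by (b) it admits a square root $u \in R^\times$, and the pair $(x_0 u,\, y_0)$ solves the equation.

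Given these two tools, the remainder is Witt-style bookkeeping. Rescaling basis vectors reduces each diagonal entry to $1$ or $\alpha$, and whenever two $\alpha$'s appear the representation lemma recasts the rank-$2$ block $\mathrm{diag}(\alpha,\alpha)$ as $\mathrm{diag}(1,\alpha^2 u^2) = \mathrm{diag}(1,1)$, so iteration leaves at most one $\alpha$, establishing existence. For the final uniqueness claim, if $\pi(x)$ and $\pi(x')$ are both nonsquares in $\mathbb{F}^\times$, then $x/x'$ has square reduction and is hence a square in $R$ by (a)--(b); rescaling the last basis vector yields the desired isometry $\mathrm{diag}(1,\ldots,1,x) \cong \mathrm{diag}(1,\ldots,1,x')$. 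I expect the main technical obstacle to be the representation lemma, as it is the one step where the finite-field counting argument must be genuinely combined with the Hensel-type lifting made available by the hypothesis $2 \in R^\times$.
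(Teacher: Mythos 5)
Your argument is correct, but it is organized differently from the paper's. The paper's proof stays close to the finite-field classification: it diagonalizes over $R$ via Proposition \ref{bilformdiag}, invokes Proposition \ref{bilformfields} on the reduction mod $\mathfrak{m}$ to arrange that the diagonal entries are of the form $(1+t_i)^{-1}$ (resp.\ $(x+t_n)^{-1}$) with $t_i \in \mathfrak{m}$, and then finishes purely by Hensel's lemma, solving $(1+m)^2 = 1+t_i$ and $(x+m)^2 = x(x+t_n)$ to rescale each basis vector individually; no Witt-type cancellation is needed because the field-level result has already been used to put the reduced form in standard shape. You instead never quote Proposition \ref{bilformfields}: you prove the square-class isomorphism $R^\times/(R^\times)^2 \cong \mathbb{F}^\times/(\mathbb{F}^\times)^2$ (your binomial-series square root of $1+\mathfrak{m}$ is a legitimate substitute for Hensel, since the coefficients lie in $\mathbb{Z}[1/2]$ and $\mathfrak{m}$ is nilpotent), establish the representation lemma $ax^2+by^2=1$ over $R$ by lifting the finite-field counting argument, and then run the Witt cancellation $\mathrm{diag}(\alpha,\alpha)\cong\mathrm{diag}(1,1)$ directly over $R$. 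Your route is longer but more self-contained, and it has two incidental advantages: it makes explicit the final clause of the proposition (different nonsquare-reducing $x$ differ by a square in $R^\times$, hence give isometric forms), and it avoids the slightly delicate step in the paper of transporting the field-level change of basis back to an $R$-basis in which the form remains diagonal. The one place you should add a line when writing this up is the Witt step: justify that a vector $w$ with $B(w,w)\in R^\times$ spans a free direct summand whose orthogonal complement is again free with nondegenerate restricted form, and that the complement's discriminant is then forced (up to squares) by the determinant, so the second diagonal entry is indeed a unit square.
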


\begin{proof}
First of all, since $R$ is a local ring, $\mathfrak{m}$ consists of the non-units in $R$, so for any unit $u \in R$, the coset $u + \mathfrak{m}$ consists solely of units. By Proposition \ref{bilformdiag} and applying Proposition \ref{bilformfields} to the induced orthogonal $\mathbb{F}$-module, we can find a basis $\{v_1, \dots, v_n\}$ of $V$ such that with respect to this basis the form is diagonal, $B(v_i, v_i) = (1 + t_i)^{-1}$ where $t_i \in \mathfrak{m}$ for each $1 \leq i \leq n - 1$ and $B$ satisfies one of the following two cases: either $B(v_n, v_n) = (1 + t_n)^{-1}$ or $B(v_n, v_n) = (x + t_n)^{-1}$, where $t_n \in \mathfrak{m}$ and $x$ is a unit in $R$ such that $\pi(x)$ is a nonsquare in $\mathbb{F}^{\times}$. 

Let us do case 1) first. For each $i$ in $\{1, \dots, n\}$, consider the following quadratic equation in $m$: $(1+m)^2 = 1 + t_i$, which can be rewritten as $m^2 + 2m - t_i = 0$. Since $t_i \in \mathfrak{m}$, reducing this monic polynomial modulo $\mathfrak{m}$ gives a monic quadratic equation with two distinct roots $m(m + 2) = 0$, one of them being $m = 0$. By Theorem 3.12 in \cite{ganske1973finite}, it follows $m^2 + 2m - t_i = 0$ has a root $m_i$ in $\mathfrak{m}$. Then, in the basis $\{(1 + m_1)v_1, \dots, (1 + m_n)v_n\}$, we have $B((1+m_i)v_i, (1+m_i)v_i) = (1+m_i)^2B(v_i, v_i) = (1+m_i)^2(1+t_i)^{-1} = 1$ for all $i$.

For case 2), we can do the same thing for $1 \leq i \leq n-1$. For $i = n$, we consider the polynomial equation $(x+m)^2 = x(x + t_n)$, which when reduced modulo $\mathfrak{m}$ gives $m(m+2\pi(x)) = 0$. The same reasoning gives a root $m = m_n \in \mathfrak{m}$ of  $(x+m)^2 = x(x + t_n)$. Then, we have $B((x + m_n)v_n, (x+m_n)v_n) = (x+m_n)^2(x+t_n)^{-1} = x$. This proves the theorem.
\end{proof}

\begin{cor}
\label{finite_ring_decomp}
Let $R$ be a finite ring (where $2$ is a unit), and write $R = \prod_{i=1}^q R_i$ as the product of finite local rings. Then, there are $2^q$ isomorphism classes of orthogonal $R$-modules. 
\end{cor}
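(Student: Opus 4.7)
The plan is to reduce classification of orthogonal $R$-modules to the local case via the product decomposition $R = \prod_{i=1}^q R_i$, and then apply Proposition \ref{bilformlocalrings} componentwise.

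First I would set up the decomposition of modules. Let $e_i \in R$ be the idempotent corresponding to the $i$-th factor, so that $e_i e_j = 0$ for $i \ne j$ and $\sum_i e_i = 1$. Any $R$-module $V$ decomposes as $V = \bigoplus_{i=1}^q V_i$ where $V_i = e_i V$ is naturally an $R_i$-module, and if $V$ is free of rank $n$ over $R$ then each $V_i$ is free of rank $n$ over $R_i$. I would verify that this decomposition is functorial and compatible with all the structure at hand.

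Next I would show that a symmetric bilinear form $B \colon V \times V \to R$ splits as a direct sum of forms $B_i \colon V_i \times V_i \to R_i$. The key point is that for $v \in V_i$ and $w \in V_j$ with $i \ne j$,
\[
B(v,w) = B(e_i v, e_j w) = e_i e_j B(v,w) = 0,
\]
so there are no cross terms, and on $V_i \times V_i$ the values of $B$ lie in $e_i R = R_i$, giving $B_i$. Non-degeneracy of $B$ is equivalent to non-degeneracy of each $B_i$ since the dual decomposes as $V^* \cong \bigoplus V_i^*$. In the same way, any isometry $\phi \colon (V,B) \to (W,B_W)$ must send $V_i$ to $W_i$ (because $\phi$ is $R$-linear and hence commutes with the idempotents $e_i$), so $\phi$ restricts to an isometry $\phi_i \colon (V_i,B_i) \to (W_i,B_{W,i})$, and conversely any tuple of componentwise isometries glues to an isometry of $(V,B)$.

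Putting these together, the category of orthogonal $R$-modules is equivalent to the product of the categories of orthogonal $R_i$-modules, and in particular isomorphism classes correspond to tuples of isomorphism classes. By Proposition \ref{bilformlocalrings}, each factor contributes exactly $2$ isomorphism classes (of any fixed rank), so the total count is $2^q$, as claimed. The only mildly delicate step is verifying the vanishing of cross terms and the componentwise splitting of isometries, both of which reduce to the orthogonality of the idempotents $e_i$; after that the argument is purely bookkeeping.
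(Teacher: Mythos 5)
Your proposal is correct and follows essentially the same route as the paper: decompose $R$ via the central idempotents $e_i$, observe that the form (and any isometry) splits componentwise because $e_ie_j=0$, and then apply Proposition \ref{bilformlocalrings} to each local factor to get $2$ classes per factor, hence $2^q$ in total. You simply spell out the bookkeeping (vanishing of cross terms, functoriality, splitting of isometries) that the paper leaves implicit.
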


\begin{proof}
Such a decomposition exists by Proposition \ref{decomp}. Let $e_i = (0, \dots, 0, 1_{R_i}, 0, \dots, 0)$ (nonzero in the $i$-th spot) be the central idempotent arising from $R_i = e_iR$, so $R = \bigoplus_{i=1}^{n} e_iR$. Then, since $1_R = e_1 + \cdots + e_n$, it is clear that a bilinear form on $R$ splits as the direct sum of bilinear forms on $R_i$. The result then follows from Theorem \ref{bilformlocalrings}.
\end{proof}

\subsection{Column-adapted maps}

In this last subsection, we discuss the notion of column-adapted and row-adapted maps, introduced in \cite{Putman_2017}. They are used in our proof of Lemma \ref{OrI_factor_lem}.

\begin{Def}
Let $R$ be a commutative local ring. An $R$-linear map $f:R^m\to R^n$ is \emph{column-adapted} if there is an $n$-element subset $S_c(f) = \{s_1<s_2<\dots<s_n\}\subseteq [m]$ such that, if we write $f$ as a $n\times m$ matrix $M$ with respect to the standard basis, then
\begin{itemize}
\item The $s_i$th column of $M$ has 1 on the $i$th position and 0 elsewhere;
\item The entries $(i,j)$ where $j<s_i$ are all non-invertible.
\end{itemize}
\end{Def}

For example, the map $f:R^5\to R^3$ defined by the matrix
\[
\left(\begin{matrix}
\ast & 1 & 0 & \bullet & 0 \\
\ast & 0 & 1 & \bullet & 0 \\
\ast & 0 & 0 & \ast & 1
\end{matrix}\right)
\]
is column adapted, if the entries labeled with $\ast$ are non-invertible (the entries labeled with $\bullet$ can be any scalar).

In the general case where $R$ is a finite commutative ring, by Proposition \ref{decomp} there exists an isomorphism
\[R \cong R_1\times \dots \times R_q\]
where the $R_i$'s are finite commutative local rings. In this case, we say a map $f:R^m\to R^n$ is \emph{column-adapted} if the induced maps $R_i^m \to R_i^n$ are all column-adapted. Also, we say $f$ is \emph{row-adapted} if its transpose is column-adapted; in this case, we also define $S_r(f) = S_c(f^{T})$.

The next two lemmas were established in \cite{Putman_2017} as Lemmas 2.9 and 2.10.

\begin{lem}
The composition of two column-adapted maps is column-adapted. Similarly, the composition of two row-adapted maps is row-adapted.
\end{lem}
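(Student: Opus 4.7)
The plan is to reduce to the local case, then produce an explicit pivot set for the composition and verify the two defining conditions by a clean case split. Fix a finite commutative local ring $R$ with maximal ideal $\mathfrak m$, and suppose $f\colon R^m\to R^n$ and $g\colon R^n\to R^p$ are column-adapted with pivot sets $S_c(f)=\{s_1<\dots<s_n\}$ and $S_c(g)=\{t_1<\dots<t_p\}$. Let $M_f$ and $M_g$ denote their matrices, so the matrix of $g\circ f$ is $M_gM_f$. I claim
\[
S_c(g\circ f)\;=\;\{s_{t_1}<s_{t_2}<\dots<s_{t_p}\}\;\subseteq[m].
\]

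First I would check the pivot-column condition. For each $k\in[p]$, the $s_{t_k}$-th column of $M_f$ equals $e_{t_k}$ by the column-adapted property of $f$; hence the $s_{t_k}$-th column of $M_gM_f$ is $M_g e_{t_k}$, i.e.\ the $t_k$-th column of $M_g$, which equals $e_k$ by the column-adapted property of $g$. So the proposed pivot columns look correct.

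Next I would verify that every entry $(k,j)$ of $M_gM_f$ with $j<s_{t_k}$ lies in $\mathfrak m$. Write
\[
(M_gM_f)_{k,j}\;=\;\sum_{\ell=1}^{n}(M_g)_{k,\ell}\,(M_f)_{\ell,j},
\]
and split the sum at $\ell=t_k$. If $\ell<t_k$, then $(M_g)_{k,\ell}\in\mathfrak m$ since $\ell$ lies strictly to the left of the pivot in row $k$ of $M_g$. If $\ell\geq t_k$, then $s_\ell\geq s_{t_k}>j$, so $j<s_\ell$, and the column-adapted condition for $f$ forces $(M_f)_{\ell,j}\in\mathfrak m$. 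In either case the summand lies in $\mathfrak m$, and hence the total does too. This establishes the local case.

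For a general finite commutative ring $R$, the decomposition $R\cong R_1\times\dots\times R_q$ from Proposition \ref{decomp} lets one project the composition componentwise; since matrix multiplication is compatible with this product decomposition and each factor map $R_i^m\to R_i^n\to R_i^p$ is a composition of column-adapted maps over a local ring, the previous paragraphs apply in each factor. Finally, the row-adapted case follows formally: if $f$ and $g$ are row-adapted, then $f^T$ and $g^T$ are column-adapted, so $(g\circ f)^T=f^T\circ g^T$ is column-adapted, making $g\circ f$ row-adapted. The only genuinely delicate point, and therefore where I would focus attention, is the case split in the middle paragraph; it relies on the fact that the pivot positions $s_\ell$ are monotone in $\ell$, which is precisely the ordering built into the definition of $S_c(f)$.
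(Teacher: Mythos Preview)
Your proof is correct. Note, however, that the paper does not actually supply its own proof of this lemma: it simply records the statement and attributes it to \cite{Putman_2017}, Lemma~2.9. So there is no in-paper argument to compare against. That said, your argument is the natural direct one---identifying $S_c(g\circ f)=\{s_{t_1}<\dots<s_{t_p}\}$ and checking the two defining conditions via the case split on $\ell<t_k$ versus $\ell\geq t_k$---and it matches the standard proof in the literature. The reduction to the local case and the transpose trick for the row-adapted version are both sound.
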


\begin{lem} \label{factor_lem}
Let $R$ be a finite commutative ring, and let $f:R^{n'} \to R^{n}$ be a surjection. Then we can uniquely factor $f=f_2f_1$, where $f_1: R^{n'} \to R^n$ is column-adapted and $f_2: R^n\to R^n$ is an isomorphism.
\end{lem}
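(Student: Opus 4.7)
The plan is to reduce to the case where $R$ is a finite commutative local ring. By Proposition \ref{decomp}, we can decompose $R = \prod_{i=1}^q R_i$ into finite local rings; since both column-adaptedness and the property of being an isomorphism are checked componentwise, existence and uniqueness over $R$ follow by assembling the corresponding statements over each $R_i$. So we may assume $R$ is local, with maximal ideal $\mathfrak{m}$ and residue field $\mathbb{F} = R/\mathfrak{m}$; we denote the reduction of a map $h$ modulo $\mathfrak{m}$ by $\bar h$.

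For existence, we write $f$ as a matrix $M$ with columns $v_1, \ldots, v_{n'} \in R^n$ and inductively define pivot indices $s_1 < \cdots < s_n$ in $[n']$ by letting $s_i$ be the smallest index strictly greater than $s_{i-1}$ such that $\bar v_{s_i}$ does not lie in the $\mathbb{F}$-span of $\bar v_{s_1}, \ldots, \bar v_{s_{i-1}}$. Surjectivity of $f$ forces surjectivity of $\bar f$, so exactly $n$ such pivots exist. By Nakayama's lemma, $v_{s_1}, \ldots, v_{s_n}$ then form an $R$-basis of $R^n$, so we can define $f_2 \colon R^n \to R^n$ to be the isomorphism sending $e_i$ to $v_{s_i}$ and set $f_1 = f_2^{-1} \circ f$. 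By construction $f_1(e_{s_i}) = e_i$; meanwhile, for $j < s_i$ the choice of pivots forces $\bar v_j$ to lie in the span of $\bar v_{s_1}, \ldots, \bar v_{s_{i-1}}$, so the $(i,j)$-entry of $f_1$, which is the coefficient of $v_{s_i}$ in the expansion of $v_j$ in the new basis, lies in $\mathfrak{m}$ and is hence non-invertible. This verifies the column-adapted conditions.

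For uniqueness, the key observation is that the pivot set is determined by $f$ alone. Given any such factorization $f = f_2 f_1$, the $j$th column of $f_1$ has all entries in rows $i$ with $s_i > j$ non-invertible, so $\bar f_1(e_j)$ lies in $\operatorname{span}(\bar e_i : s_i \leq j)$; applying $\bar f_2$ shows $\bar f(e_j) \in \operatorname{span}(\bar f(e_{s_k}) : s_k \leq j)$, while $\bar f(e_{s_i}) = \bar f_2(\bar e_i)$ is not in the span of $\bar f(e_{s_1}), \ldots, \bar f(e_{s_{i-1}})$ since $\bar f_2$ is an isomorphism. Hence $s_i$ is intrinsically characterized as the smallest index $> s_{i-1}$ with $\bar f(e_{s_i}) \notin \operatorname{span}(\bar f(e_{s_1}), \ldots, \bar f(e_{s_{i-1}}))$, matching the construction above. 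Now, given two factorizations $f = f_2 f_1 = f_2' f_1'$, the isomorphism $g = (f_2')^{-1} f_2$ satisfies $f_1' = g \circ f_1$; evaluating at $e_{s_i}$ yields $g(e_i) = e_i$ for all $i$, so $g = \mathrm{id}$ and the factorization is unique.

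The main obstacle is the lifting step from $\mathbb{F}$ to $R$: we must know that vectors in $R^n$ whose reductions form an $\mathbb{F}$-basis are themselves an $R$-basis, and that being in $\mathfrak{m}$ coincides with being non-invertible. Both rest essentially on the locality of $R$ together with Nakayama's lemma, which is precisely why reducing to the local case at the outset is indispensable; over a general finite commutative $R$ one must work componentwise because ``non-invertible'' is not a product-ring-local condition.
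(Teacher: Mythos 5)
Your proof is correct. The paper does not actually prove this lemma itself --- it is imported from \cite{Putman_2017} (Lemmas 2.9 and 2.10), whose argument is essentially the same one you give: reduce to the local case componentwise, select pivot columns greedily by linear independence of the reductions modulo the maximal ideal, lift via Nakayama to get the basis defining $f_2$, and recover uniqueness from the intrinsic characterization of the pivot set. Your write-up is a valid self-contained substitute for the citation.
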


%

\section{Local Noetherianity}

\label{noether}

Throughout this section, we fix a finite commutative ring $R$ where 2 is a unit. Suppose we fix a factorization $R \cong R_1\times R_2 \times \dots \times R_q$, where each $R_i$ is a local ring.

\begin{Def}
Define the following categories:
\begin{itemize}
\item $\OrI(R)$: objects are \underline{or}thogonal $R$-modules $(V,B)$, and morphisms are \underline{i}sometries.
\item $\OrIsq(R)$: the full subcategory of $\OrI(R)$ spanned by objects $(V,B)$ such that the projection of $\det B$ onto the residue fields of each $R_i$ ($1\le i \le q$) is a square.
\end{itemize}

\end{Def}

We point out the important point that for any object $(V,B)\in \OrI(R)$, its group of automorphisms $\Aut_{\OrI(R)}(V)$ is precisely the orthogonal group associated to $(V,B)$. 

The goal in this section is to prove the following theorem:

\begin{thm} \label{OrI_noetherian}
For a finite commutative ring $R$ and a Noetherian ring $\KK$, the category \[\Rep_{\KK}(\OrI(R))\] is locally Noetherian.
\end{thm}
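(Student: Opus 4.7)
The plan is to apply Lemma \ref{Noetherian_condition} and reduce to showing that any submodule $N$ of the representable module $P_{\OrI(R), X}$ is finitely generated, for each orthogonal $R$-module $X$. Concretely, $P_{\OrI(R),X}(Y) = \KK[\Hom_{\OrI(R)}(X,Y)]$, so I must show that any collection of isometries out of $X$ that is closed under post-composition by morphisms of $\OrI(R)$ admits a finite generating sub-collection.

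The first step is to establish a workable normal form for isometries out of $X$. Any isometry $\phi\colon X \to Y$ determines the orthogonal complement $\phi(X)^\perp \subseteq Y$, so $Y$ splits canonically as $\phi(X) \oplus \phi(X)^\perp$. Thus a morphism in $\OrI(R)$ is equivalent to an isometric embedding of $X$ as an orthogonal summand together with the complementary summand — very much analogous to how the $\mathbf{VIC}$ morphisms of \cite{Putman_2017} come equipped with a complement. Combining this with the column-adapted factorization of Lemma \ref{factor_lem} on the underlying $R$-linear map gives a combinatorial parameterization of $\Hom_{\OrI(R)}(X,Y)$ by column-adapted matrices with extra decorations recording the complement.

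Next, I would construct an auxiliary category $\mathcal{C}$ — morally a rigidified version of $\OrI(R)$ whose objects carry additional structure (for instance, an ordered orthogonal decomposition into rank-one blocks of specified class) — together with a forgetful functor $F\colon \mathcal{C} \to \OrI(R)$. Provided the rigidification is by only finitely many choices per object, $F$ is essentially surjective and finite in the sense given after Lemma \ref{fingen_quotient}, so Lemma \ref{Noetherian_transfer} reduces the theorem to local Noetherianity of $\Rep_\KK(\mathcal{C})$. For a well-chosen $\mathcal{C}$, the latter should reduce in turn to a Higman-type well-quasi-ordering statement on the column-adapted data, in the spirit of the arguments carried out in \cite{Putman_2017} for $\mathbf{VIC}$ and $\mathbf{SI}$.

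The principal obstacle, compared with the symplectic case, is the classification given by Corollary \ref{finite_ring_decomp}: there are $2^q$ isometry classes of orthogonal modules at each rank, indexed by the determinant class in the residue field of each local factor. This $(\mathbb{Z}/2)^q$-valued coloring of objects must be carried through both the rigidification and the wqo argument; in particular, the ``standard'' form $X$ is not unique and representable modules split according to the class data of the target. A natural way to manage this is first to establish the theorem for the subcategory $\OrIsq(R)$, where all determinants are squares and objects are uniform in behavior, using the decomposition $R \cong \prod R_i$ from Proposition \ref{decomp} to reduce the combinatorial core to the local case, and then deduce the full $\OrI(R)$ statement by assembling finitely many twists by the different isometry classes.
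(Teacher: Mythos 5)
Your outline matches the paper's proof in all essentials: the paper first treats $\OrIsq(R)$ by introducing a rigidified category $\OOrIsq(R)$ of standard forms with row-adapted isometries, establishing a Higman-type well partial order on those morphisms, and transferring Noetherianity along the finite, essentially surjective inclusion; it then handles the remaining $2^q-1$ determinant classes by twisting $\OrIsq(R)$ with a rank-one summand and combines the finitely many subcategories at the end. The only corrections to note are that no ``decoration recording the complement'' is needed, since in $\OrI(R)$ the complement is canonically the orthogonal complement, and that Lemma \ref{factor_lem} applies to surjections, so it must be applied to the transpose of an isometry, i.e. one works with row-adapted rather than column-adapted maps.
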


Our plan is to show the theorem with $\OrI(R)$ replaced with $\OrIsq(R)$ in $\mathsection$3.1, and generalize it to the full category in $\mathsection$3.2.

\subsection{The square case}
In this subsection, our goal is to show that the category of $\OrIsq(R)$-modules is locally Noetherian (stated as Theorem \ref{OrIsq_noetherian} below). The proof is essentially identical to the analogous argument in \cite{Putman_2017}.

\begin{Def}
Define the following categories:
\begin{itemize}
\item $\OOrIsq'(R)$: objects are orthogonal $R$-modules $(R^n,B)$ such that the projection from $\det B$ onto $R_i$ is a square, and morphisms are row-adapted isometries.
\item $\OOrIsq(R)$: full subcategory of $\OOrIsq'(R)$ spanned by objects which are orthogonal $R$-modules $(R^n,B_{\sq})$ such that $B_{\sq}$ is the identity matrix under the standard basis.
\end{itemize}
\end{Def}

\begin{lem} \label{OrI_factor_lem}Let $R$ be a finite commutative ring where 2 is a unit. Let \[f\in \Hom_{\OrIsq(R)}((R^{n},B),(R^{n'},B')),\] then we can uniquely write $f=f_1f_2$ such that 
\begin{itemize}
\item $f_2\in\Hom_{\OrIsq(R)}((R^{n}, B), (R^{n}, \beta))$ where $\beta$ has square determinant;
\item $f_1 \in \Hom_{\OOrIsq'(R)}((R^{n},\beta) \to (R^{n'},B'))$.
\end{itemize}
\end{lem}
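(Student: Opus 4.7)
The plan is to reduce to Lemma \ref{factor_lem} by transposition. The input is an isometry $f \colon (R^n, B) \to (R^{n'}, B')$, which I view as a map of free $R$-modules in the standard bases. First I would observe that $f$ is a split injection of $R$-modules: because $B'$ restricts to the non-degenerate form $B$ on $f(R^n)$, the classical orthogonal-complement decomposition gives $R^{n'} = f(R^n) \oplus f(R^n)^\perp$, so $f$ admits a retraction. Consequently the transpose $f^T \colon R^{n'} \to R^n$ (computed with respect to the standard bases) is a split surjection, and in particular surjective.

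Now I would apply Lemma \ref{factor_lem} to $f^T$ to obtain a unique factorization $f^T = g_2 g_1$ with $g_1 \colon R^{n'} \to R^n$ column-adapted and $g_2 \colon R^n \to R^n$ an isomorphism. Transposing gives $f = g_1^T g_2^T$; set $f_1 := g_1^T$ (which is row-adapted by definition) and $f_2 := g_2^T$ (still an isomorphism of $R^n$). The only remaining freedom is to place a bilinear form $\beta$ on the intermediate $R^n$ so that both $f_1$ and $f_2$ become isometries. I would force $f_1$ to be an isometry by defining the Gram matrix $[\beta] := f_1^T [B'] f_1$; then the isometry condition $f_2^T [\beta] f_2 = [B]$ reduces to $(f_1 f_2)^T [B'] (f_1 f_2) = [B]$, i.e.\ $f^T [B'] f = [B]$, which holds because $f$ was itself an isometry.

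Next I would verify that $(R^n, \beta)$ is a legitimate object of $\OOrIsq'(R)$. Non-degeneracy of $\beta$ follows from the matrix identity $[\beta] = f_2^{-T} [B] f_2^{-1}$ together with non-degeneracy of $B$ and invertibility of $f_2$. The square-determinant condition is equally immediate: $\det \beta$ differs from $\det B$ by the square $\det(f_2)^{-2}$, and $\det B$ already projects to a square in every residue field because $(R^n, B) \in \OrIsq(R)$. Uniqueness of the triple $(f_1, f_2, \beta)$ is then inherited from the uniqueness clause of Lemma \ref{factor_lem} after transposition, with $\beta$ being forced by $f_1$ and $B'$.

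The main obstacle, conceptually, is the first step: producing the splitting of $f$ that lets us invoke Lemma \ref{factor_lem} at all. Once the orthogonal-complement decomposition is in hand, the rest is formal bookkeeping that transports the general linear factorization of \cite{Putman_2017} into the orthogonal setting by carefully tracking what happens to the bilinear forms on the middle object.
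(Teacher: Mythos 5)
Your proposal is correct and follows essentially the same route as the paper: transpose, apply Lemma \ref{factor_lem}, transpose back, and then define $\beta$ so that both factors become isometries. You additionally supply the justifications (surjectivity of $f^T$ via the orthogonal-complement splitting, and the non-degeneracy and square-determinant checks for $\beta$) that the paper's one-line proof leaves implicit.
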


\begin{proof}
Transposing, applying Lemma \ref{factor_lem}, then transposing back, we can uniquely write $f=f_1f_2$, where $f_1 : R^{n}\to R^{n'}$ is row-adapted and $f_2:R^{n}\to R^{n}$ is an isomorphism. We can then uniquely choose $\beta$ a symmetric form on $R^{n}$ so that $f_1$ and $f_2$ are isometries. 
\end{proof}

Our first step in achieving the goal of this subsection is to show the existence of a well partial ordering $\preceq$ on the set
\[\mathcal{P}_R(d,B) = \bigsqcup_{n\ge 0} \Hom_{\OOrIsq'(R)} ((R^{d},B), (R^{n},B_{\sq}))\]
as described in the following lemma.

\begin{lem} \label{OrI_order}
Fix $R, d, B$. There exists a well partial ordering $\preceq$ on $\mathcal{P}_R(d,B)$ that can be extended to a total ordering $\le$ such that for $f,g\in \mathcal{P}_R(d,B)$, mapping to $R^{n}, R^{n'}$ respectively, satisfying $f\preceq g$, there exists some $\phi \in \Hom_{\OOrIsq(R)}((R^{n}, B_{\sq}), (R^{n'},B_{\sq}))$ such that:
\begin{itemize}
    \item $g=\phi f$;
    \item For any $f_1 \in \Hom_{\OOrIsq'(R)}((R^{d}, B), (R^{n},B_{\sq}))$ with $f_1<f$, $\phi f_1 < g$.
\end{itemize}
\end{lem}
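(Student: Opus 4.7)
The plan is to encode each row-adapted isometry by a word over a finite alphabet, use Higman's lemma to obtain a well-partial-order via the scattered-subword relation, extend to a total order with pivot-tuples as the primary tiebreaker, and then construct $\phi$ explicitly from the subword embedding.

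Let $\Sigma = R^d \sqcup \{\star_1,\ldots,\star_d\}$, which is finite because $R$ is. For each $f \in \mathcal{P}_R(d,B)$ with codomain $R^n$ and pivot positions $s_1<\cdots<s_d$, associate the word $w_f\in\Sigma^n$ whose $j$-th letter is $\star_i$ when $j=s_i$ and is otherwise the $j$-th row of the matrix of $f$, viewed as an element of $R^d$. Declare $f\preceq g$ when $w_f$ is a scattered subword of $w_g$, i.e.\ when there is an order-preserving injection $\iota\colon[n]\hookrightarrow[n']$ with matching letters. Higman's lemma for the finite alphabet $\Sigma$ shows the scattered-subword order on $\Sigma^*$ is a well-quasi-order, and antisymmetry (equal-length subword-related words are equal) upgrades this to a well-partial-order on $\mathcal{P}_R(d,B)$.

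Extend $\preceq$ to the total order $\le$ that compares first by codomain dimension $n$, then by the pivot-tuple $(s_1,\ldots,s_d)\in[n]^d$ in lex order, and finally for equal pivot-tuples by the sequence of non-pivot rows in position order, lex over a fixed total order on $R^d$; this refines $\preceq$ since $f\prec g$ strictly forces $n(f)<n(g)$. Given $f\preceq g$ via $\iota$, construct $\phi\colon R^n\to R^{n'}$ as the $n'\times n$ matrix $P$ whose row $\iota(j)$ equals $e_j^T$ for each $j\in[n]$, and at each non-$\iota$ row $k$ has entries $P_{k,s_i}=M'_{k,i}$ for every $i\in[d]$ together with $P_{k,j}=0$ for $j\notin\{s_1,\ldots,s_d\}$. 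Direct row-by-row computation yields $\phi f=g$; row-adaptation of $\phi$ inherits the non-invertibility condition above each pivot from that of $g$; and the column orthonormality of $P$ reduces, after cancelling the pivot-row contributions, to the identity $\sum_{k\notin\iota([n])} M'_{k,i}M'_{k,i'} = B_{i,i'} - \sum_{j\in[n]} M_{j,i}M_{j,i'} = 0$, which uses that both $f$ and $g$ are isometries of the same form $B$.

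For the monotonicity, take $f_1<f$ with common codomain $R^n$. Since the composition of row-adapted maps is row-adapted, $\phi f_1$ is row-adapted, and one checks its pivots sit at $\iota(s^{(1)}_1)<\cdots<\iota(s^{(1)}_d)$, the images under $\iota$ of the pivots of $f_1$. If the pivot-tuples of $f_1$ and $f$ already disagree, then order-preservation of $\iota$ propagates the strict lex inequality onto the pivot-tuples of $\phi f_1$ and $g$, giving $\phi f_1<g$ at the pivot-tuple key. If the pivot-tuples coincide, then $(M_1)_{s_i,\cdot}=e_i^T$, so the computation $(PM_1)_{k,\cdot}=\sum_i M'_{k,i}(M_1)_{s_i,\cdot}=M'_{k,\cdot}$ shows $\phi f_1$ and $g$ agree at every non-$\iota$ row; the comparison reduces to the $\iota$-positions, where it mirrors the content-lex inequality $f_1<f$. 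The main obstacle I expect is isolating the right total order: prioritizing the pivot-tuple over row content is essential, since otherwise, when the pivot structures of $f_1$ and $f$ differ, the non-$\iota$ rows of $\phi f_1$ can pick up uncontrolled scalings from far-away rows of $f_1$ and flip a naive row-level shortlex on $w_{\phi f_1}$ versus $w_g$ in the wrong direction.
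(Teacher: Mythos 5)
Your argument in the case where $R$ is a finite commutative \emph{local} ring is correct and follows essentially the same route as the paper's: the same Higman-type encoding of a row-adapted matrix as a word over the finite alphabet $R^d$ plus pivot placeholders (the paper uses a single symbol $\bullet$ for every pivot row where you use $\star_1,\dots,\star_d$; since both words contain exactly $d$ pivots and any order-preserving embedding must match them in order, the two encodings are equivalent), the same three-tier total order (codomain rank, then $S_r$ in lexicographic order, then row content), and the same explicit insertion matrix $\phi$, verified to preserve $B_{\sq}$ by the same cancellation identity coming from $f$ and $g$ both being isometries of the fixed form $B$. Your explicit checks that $S_r(\phi f_1)=\iota(S_r(f_1))$ and that the pivot-tuple must dominate the row-content comparison are precisely the content of the paper's Lemma \ref{OrI_insertion}.

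The genuine gap is that the lemma is stated for an arbitrary finite commutative ring $R$ (with $2$ a unit), and your proof tacitly assumes $R$ is local. When $R\cong R_1\times\dots\times R_q$ with $q>1$, a row-adapted map is by definition one whose induced component maps over each local factor $R_i$ are row-adapted, and these components carry $q$ a priori different pivot sets; so ``the pivot positions $s_1<\dots<s_d$ of $f$'' are not well defined, the word $w_f$ cannot be formed, and your non-invertibility arguments (a sum of products of non-units is a non-unit) are also only valid in a local ring. The paper handles this by first proving the local case exactly as you do, and then embedding $\mathcal{P}_R(d,B)$ into $\mathcal{P}_{R_1}(d,B)\times\dots\times\mathcal{P}_{R_q}(d,B)$, taking the product partial order (still a well partial order) with a lexicographic total extension, and building $\phi$ componentwise. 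You need to add this reduction for your proof to cover the stated generality.
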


To prove Lemma \ref{OrI_order}, we need to first show the following technical lemma. The proof of Lemma \ref{OrI_order} is located shortly after it.

\begin{lem} \label{OrI_insertion}
Fix $R,d,B,f,g,n,n'$ as described in Lemma \ref{OrI_order}, except here we further restrict to the case where $R$ is a finite commutative local ring. Let the rows of $g$ be $r_1,\dots,r_{n'}$. Suppose that $f$ can be obtained from $g$ by deleting certain rows $r_{i_1}, \dots, r_{i_{n'-n}}$ where $I=\{i_1,\dots,i_{n'-n}\}\subseteq [n']$ such that $I \cap S_r(g) = \varnothing$. Then there exists $\phi \in \Hom_{\OOrIsq(R)}(R^{n}, R^{n'})$ such that:
\begin{itemize}
\item For any $h \in \Hom_{\OOrIsq'(R)}((R^{d},B), (R^{n}, B_{\sq}))$ with $S_r(h) = S_r(f)$, $\phi h$ can be obtained from $h$ by inserting the rows $r_i$ in position $i$ for each $i\in I$. In particular, $g = \phi f$.
\item For any $h \in \Hom_{\OOrIsq'(R)}((R^{d},B), (R^{n}, B_{\sq}))$ with $S_r(h) < S_r(f)$ in lexicographic order, then $S_r(\phi h) < S_r(g)$ in lexicographic order.
\end{itemize}
\end{lem}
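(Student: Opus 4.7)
The plan is to construct $\phi$ explicitly as an $n' \times n$ matrix and then verify each required property. Write $[n'] \setminus I = \{j_1 < j_2 < \dots < j_n\}$ and $S_r(f) = \{s_1 < s_2 < \dots < s_d\}$. Since $I \cap S_r(g) = \varnothing$, the standard-basis rows of $g$ survive the deletion, and matching them against $S_r(f)$ under the relabeling $j_m \leftrightarrow m$ gives $S_r(g) = \{j_{s_1} < \dots < j_{s_d}\}$. I define $\phi$ by declaring its $j_m$-th row to be $e_m^T$, and for each $i \in I$ declaring the $i$-th row to be the row whose entry in column $s_k$ is $(r_i)_k$ and whose entries in columns outside $S_r(f)$ vanish. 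A direct computation then gives $\phi f = g$: rows at positions $j_m$ recover $f_m = g_{j_m}$, while for $i \in I$ the row collapses to $\sum_{k=1}^d (r_i)_k e_k^T = r_i$.

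The first step is to verify that $\phi \in \Hom_{\OOrIsq(R)}(R^n, R^{n'})$. Row-adaptedness with $S_r(\phi) = [n'] \setminus I$ amounts to two checks: the $j_m$-th row is $e_m^T$ by construction, and entries $\phi_{j, i}$ with $j < j_i$ are non-invertible. If $j = j_m$ with $m < i$, then $\phi_{j_m, i} = \delta_{m, i} = 0$; if $j \in I$, then $\phi_{j, i}$ vanishes unless $i = s_k$, in which case $\phi_{j, s_k} = g_{j, k}$ is non-invertible because $j < j_{s_k} \in S_r(g)$ and $g$ is row-adapted. For the isometry condition $\phi^T \phi = I_n$, the only nontrivial case is when both column indices lie in $S_r(f)$, say $s_k$ and $s_{k'}$, where a direct computation gives $(\phi^T \phi)_{s_k, s_{k'}} = \delta_{k, k'} + \sum_{i \in I} g_{i, k} g_{i, k'}$. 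Splitting the identity $g^T g = B = f^T f$ over $I$ and its complement forces the residual sum over $I$ to vanish.

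Next I verify the two stated properties. For $h$ with $S_r(h) = S_r(f)$, the very same computation that gave $\phi f$ yields rows $h_m$ at positions $j_m$ and rows $r_i$ at positions $i \in I$, using that every $h_{s_k}$ already equals $e_k^T$. For general $h$ with $S_r(h) = \{s_1' < \dots < s_d'\}$, I claim $S_r(\phi h) = \{j_{s_1'} < \dots < j_{s_d'}\}$: the rows at these positions are $h_{s_k'} = e_k^T$, so the real content is the non-invertibility condition at earlier positions.

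The main obstacle is showing that for $j \in I$ with $j < j_{s_k'}$, the entry $(\phi h)_{j, k} = \sum_{k'=1}^d (r_j)_{k'} h_{s_{k'}, k}$ lies in the maximal ideal of the local ring $R$. I argue termwise by case-splitting on how $s_{k'}$ compares to $s_k'$. If $s_{k'} \ge s_k'$, then $j < j_{s_k'} \le j_{s_{k'}}$, so $(r_j)_{k'} = g_{j, k'}$ is non-invertible by row-adaptedness of $g$. If $s_{k'} < s_k'$, then $h_{s_{k'}, k}$ is non-invertible: either $s_{k'} \notin S_r(h)$, whence the claim is immediate from row-adaptedness of $h$, or $s_{k'} = s_{k''}'$ for some $k'' < k$, in which case $h_{s_{k''}', k} = \delta_{k'', k} = 0$. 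In every case the $k'$-th summand lies in the maximal ideal, so their sum does as well, and the claim on $S_r(\phi h)$ is established. The conclusion $S_r(\phi h) < S_r(g)$ in lexicographic order then follows because the map $s \mapsto j_s$ is strictly order-preserving, transferring any lex comparison of $S_r(h) = \{s_k'\}$ with $S_r(f) = \{s_k\}$ into one between $S_r(\phi h) = \{j_{s_k'}\}$ and $S_r(g) = \{j_{s_k}\}$.
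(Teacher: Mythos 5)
Your construction of $\phi$ (placing the entries of each deleted row $r_i$ into the columns indexed by $S_r(f)$, padding with zeros, and interleaving with an identity block) is exactly the paper's construction, and your verifications of row-adaptedness, the isometry identity $\phi^T\phi = I_n$ via splitting $g^Tg = f^Tf$ over $I$, and the tracking of $S_r(\phi h)$ correctly supply the details the paper labels ``straightforward to check.'' The proof is correct and follows essentially the same route as the paper's.
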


\begin{proof}
The desired map $\phi$ can be defined by the following $n'\times n$ matrix: take a $(n'-n)\times n$ matrix whose $k$th row $\widehat{r_k}$ is obtained from $r_{i_k}$ by replacing the entries not in $S_r(f)$ with zeros. Then, we shuffle the rows of this matrix with the rows of a $n\times n$ identity matrix, such that the former rows occupy exactly the indices in $I$. It is straightforward to check that the two points hold and $\phi$ preserves the standard symmetric forms.

As a toy example, let $(d,n,n')=(3,4,6)$, and let
\[
f = \left[\begin{matrix}
1 & 0 & 0 \\
a & b & c \\
0 & 1 & 0 \\
0 & 0 & 1
\end{matrix}\right] \text{ and }
g = \left[\begin{matrix}
1 & 0 & 0 \\
a & b & c \\
0 & 1 & 0 \\
d & e & f \\
0 & 0 & 1 \\ 
g & h & i
\end{matrix}\right].
\]
Then we take $\phi$ to be
\[
\phi = \left[\begin{matrix}
1 & 0 & 0 & 0 \\
0 & 1 & 0 & 0 \\
0 & 0 & 1 & 0 \\
d & 0 & e & f \\
0 & 0 & 0 & 1 \\
g & 0 & h & i
\end{matrix}\right].
\]
Consider two vectors $v_i=(w_i,x_i,y_i,z_i)$ where $i\in\{1,2\}$, let us now prove $B_{\sq}(v_1,v_2) = B_{\sq}(\phi(v_1),\phi(v_2))$. We know, by definition of $f$ and $g$, that 
\[B_{\sq}(f(w_1,y_1,z_1),f(w_2,y_2,z_2)) = B((w_1,y_1,z_1),(w_2,y_2,z_2)) = B_{\sq}(g(w_1,y_1,z_1),g(w_2,y_2,z_2)),\]
so
\[(dw_1+ey_1+fz_1)(dw_2+ey_2+fz_2) + (gw_1+hy_1+iz_1)(gw_2+hy_2+iz_2) = 0,\]
which is equivalent to $B_{\sq}(v_1,v_2) = B_{\sq}(\phi(v_1),\phi(v_2))$. In the general case,
let the rows of $f$ be $f_i$ ($1\le i\le n$), and the rows of $g$ be $g_i$ ($1\le i\le n'$). Let $\alpha = (\alpha_i),\beta = (\beta_i)$ be any two vectors in $R^n$, we want to prove that $B_{\sq}(\alpha,\beta) = B_{\sq}(\phi(\alpha),\phi(\beta))$. Let $\alpha',\beta'$ denote the vectors in $R^d$ obtained from $\alpha,\beta$ by only selecting the entries with indices in $S_r(f)$. Then
\begin{align*}
B_{\sq}(\alpha,\beta) &= \sum_{i=1}^n \alpha_i\beta_i; \\
B_{\sq}(\phi(\alpha),\phi(\beta)) &= \sum_{i=1}^n \alpha_i\beta_i + \sum_{i\in I}(g_i\cdot \alpha')(g_i\cdot \beta'); \\
B_{\sq}(f(\alpha'),f(\beta')) &= \sum_{i=1}^n (f_i\cdot \alpha')(f_i\cdot \beta'); \\
B_{\sq}(g(\alpha'),g(\beta')) &= \sum_{i=1}^{n'} (g_i\cdot \alpha')(g_i\cdot \beta').
\end{align*}
Because $B_{\sq}(f(\alpha'),f(\beta')) = B(\alpha,\beta) = B_{\sq}(g(\alpha'),g(\beta'))$, we conclude that 
\[B_{\sq}(\alpha,\beta) = B_{\sq}(\phi(\alpha),\phi(\beta)).
\]
\end{proof}

We now give the proof of Lemma \ref{OrI_order}.

\begin{proof}[Proof of Lemma \ref{OrI_order}]
Assume first that $R$ is a finite commutative local ring.
Let $f,g\in\mathcal{P}_R(d,B)$ mapping to $R^n, R^{n'}$ respectively. We declare $f\preceq g$ if $f$ can be obtained from $g$ by deleting some set of rows $I\subseteq[n']$ such that $I\cap S_r(g) = \varnothing$. This is clearly a partial order.

Let $\Sigma = R^d \sqcup \{\bullet\}$, $\bullet$ being a formal symbol. Let $\Sigma^*$ be the set of words whose letters come from $\Sigma$. There is a natural well-ordered poset structure on $\Sigma^*$ (cf. \cite{Putman_2017} Lemma 2.5). We will prove that $(\mathcal{P}_R(d,B),\preceq)$ is isomorphic to a subposet of $\Sigma^{\ast}$, which would imply that $\preceq$ is a well partial ordering. For $f\in\Hom_{\OOrIsq'(R)}((R^{d},B),(R^{n},B_{\sq}))$, let $r_i$ represent the $i$th row of $f$ if $i\notin S_r(f)$, else let $r_i = \bullet$. Thus, each $r_i \in \Sigma$, and we map $f$ to the word $r_1 r_2 \dots r_n \in \Sigma^{\ast}$. Clearly this map is an order-preserving injection, so we conclude that $\preceq$ is a well partial ordering.

Next, we extend $\preceq$ to a total ordering $\leq$. Fix an arbitrary total order on $R^{d}$. For $f\neq g$ the order is defined by
\begin{itemize}
    \item If $n<n'$ then $f<g$;
    \item Otherwise, if $S_r(f)<S_r(g)$ in lexicographic order, then $f<g$;
    \item Otherwise, compare the sequences of rows of $f$ and $g$ by lexicographic order and the total order on $R^{d}$.
\end{itemize}
This clearly extends $\preceq$, and the claimed properties follow by taking $\phi$ as described by Lemma \ref{OrI_insertion}. This shows Lemma 3.5 in the case where $R$ is a finite commutative local ring.

In the general case where $R$ is a finite commutative ring, fix an isomorphism
\[R \cong R_1\times R_2\times \dots \times R_q\]
where each $R_i$ is a commutative local ring. Then
\[\Hom_{\OOrIsq'(R)}(R^n, R^{n'}) = \Hom_{\OOrIsq'(R_1)}(R_1^n, R_1^{n'}) \times \dots \times \Hom_{\OOrIsq'(R_q)}(R_q^n, R_q^{n'}).\]
This implies that each element $f \in \mathcal{P}_R(d, B)$ can be viewed as a tuple \[(f_1,\dots,f_q) \in \mathcal{P}_{R_1}(d, B) \times \dots \times \mathcal{P}_{R_q}(d, B),\]
where each $f_i$ maps into $R_i^{n'}$ for fixed $n'$.
We can construct a partial order on the product by using the product partial order (where one element is smaller than another iff every component is). Then, extend this to a total order by lexicographic order. This restricts to a total order on $\mathcal{P}_R(d, B)$ that satisfies the necessary assumptions of Lemma \ref{OrI_order}.
\end{proof}

Using this well ordering, we can deduce the following theorem. The proof closely follows ideas in Section 4 of \cite{SamSnowden_2017}.

\begin{thm} \label{OOrI_noetherian}
Let $R$ be a finite commutative ring. For $d\ge 0$ and $B$ a symmetric form on $R^{d}$, any $\OOrIsq(R)$-submodule of the $\OOrIsq(R)$-module
\[Q_{d,B} = \KK[\Hom_{\OOrIsq'(R)}((R^{d},B),-)]\]
is finitely generated. As a corollary, the category of $\OOrIsq(R)$-modules is locally Noetherian.
\end{thm}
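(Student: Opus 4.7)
My plan is to run a Gröbner-basis style argument keyed to the well partial order $\preceq$ and its total extension $\le$ from Lemma \ref{OrI_order}. For each nonzero $x \in M(R^n, B_{\sq})$, where $M \subseteq Q_{d,B}$ is a submodule, I let $\mathrm{LT}(x) \in \mathcal{P}_R(d,B)$ denote the largest basis element appearing in $x$ under $\le$, and $\mathrm{LC}(x) \in \KK$ denote its coefficient. The crucial compatibility I will exploit is the following: whenever $f \preceq g$ and $\phi \in \OOrIsq(R)$ is the morphism supplied by Lemma \ref{OrI_order} with $\phi f = g$, any $x \in M$ with $\mathrm{LT}(x) = f$ is sent by $\phi$ to an element $\phi_* x \in M$ with $\mathrm{LT}(\phi_* x) = g$ and $\mathrm{LC}(\phi_* x) = \mathrm{LC}(x)$, because the second bullet of Lemma \ref{OrI_order} forces every strictly lower term $f' < f$ occurring in $x$ to satisfy $\phi f' < g$.

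Suppose for contradiction that $M$ is not finitely generated. Using the well-orderedness of $\le$, I will greedily construct a sequence $x_1, x_2, \ldots \in M$ with $x_{j+1} \notin M_j := \langle x_1, \ldots, x_j \rangle$ and such that $f_{j+1} := \mathrm{LT}(x_{j+1})$ is the smallest leading term attained by any element of $M \setminus M_j$. Write $c_j := \mathrm{LC}(x_j)$. The heart of the argument will be the reduction claim that for every $j$, the coefficient $c_j$ does not lie in the ideal $J_j := (c_i : i < j, \ f_i \preceq f_j) \subseteq \KK$. Indeed, any relation $c_j = \sum_\ell \lambda_\ell c_{i_\ell}$ with $i_\ell < j$ and $f_{i_\ell} \preceq f_j$ would yield, via the morphisms $\phi_\ell$ from Lemma \ref{OrI_order}, an element $y := \sum_\ell \lambda_\ell (\phi_\ell)_* x_{i_\ell} \in M_{j-1}$ satisfying $\mathrm{LT}(y) = f_j$ and $\mathrm{LC}(y) = c_j$, so that $x_j - y \in M \setminus M_{j-1}$ would have leading term strictly below $f_j$, contradicting the minimal choice.

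The contradiction then arises from the defining property of $\preceq$: the infinite sequence $(f_j)$ must contain a weakly increasing subsequence $f_{j_1} \preceq f_{j_2} \preceq \cdots$, and along this subsequence the reduction claim forces $(c_{j_1}) \subsetneq (c_{j_1}, c_{j_2}) \subsetneq \cdots$ to be a strictly ascending chain of ideals in $\KK$, violating Noetherianity of $\KK$. The corollary will follow from Lemma \ref{Noetherian_condition} once one notes that $P_{\OOrIsq(R), (R^d, B_{\sq})}$ coincides with $Q_{d, B_{\sq}}$ on objects of $\OOrIsq(R)$, since $\OOrIsq(R)$ is a full subcategory of $\OOrIsq'(R)$.

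The main difficulty will be that $\KK$ is only assumed Noetherian rather than a field, so I cannot simply cancel leading coefficients as in a classical Gröbner basis argument over a field. Passing to the ideals $J_j$, rather than comparing single leading coefficients, is exactly the device that will allow the well-partial-orderedness of $\preceq$ to interact cleanly with the ACC in $\KK$ and close the loop.
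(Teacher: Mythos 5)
Your proposal is correct and follows essentially the same Gr\"obner-basis argument as the paper: leading terms with respect to the total extension of the well partial order from Lemma \ref{OrI_order}, the morphisms $\phi$ to propagate leading terms while keeping lower terms lower, extraction of a $\preceq$-increasing subsequence, and the ACC in $\KK$ applied to ideals of leading coefficients. The only difference is bookkeeping --- you run a greedy minimal-leading-term sequence where the paper uses initial modules of an ascending chain of submodules --- so the two proofs are interchangeable.
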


\begin{proof}
In view of Lemma \ref{Noetherian_condition}, it suffices to prove that any submodule of $Q_{d,B}$ is finitely generated (since we can take $B=B_{\sq}$).

Fix $d,B,R,\KK$, so we abbreviate $Q_{d,B}$ as $Q$. For an element $f\in \Hom_{\OOrIsq'(R)}((R^{d},B),R^{n})$, let $e_f$ denote the basis vector in $Q(R^{n})$ corresponding to $f$. For an element $x\in Q(R^{n})$, define its \textbf{initial term} $\init(x)$ as follows: if $f$ is $\le$-maximal such that $e_f$ has coefficient $\alpha_f \neq 0$ in $x$, $\init(x) = \alpha_f e_f$. Let $M$ be a submodule of $Q$, we also define $\init(M)$ to be a function taking $R^{n}$ to the $\KK$-module $\KK[\init(x) \mid x\in M(R^{n})]$.

We claim that if $N$ is a submodule of $M$ and $N\neq M$, then $\init(N) \neq \init(M)$. Suppose for contradiction that $\init(N) = \init(M)$. Pick $y\in M(R^{n})\backslash N(R^{n})$ such that $\init(y) = \alpha_t e_t$ is $\le$-minimal. Since $\init(M) = \init(N)$, there exists $z\in N(R^{n})$ such that $\init(z) = \init(y)$, but then $z-y\notin N(R^{n})$ and $\init(z-y)$ is smaller than $e_t$, contradiction. This proves the claim.

Suppose now that there exists a increasing sequence of submodules of $Q$
\[M_0 \subsetneq M_1 \subsetneq M_2 \subsetneq \dots.\]
The claim implies that $\init(M_i-1) \neq \init(M_{i})$, so there exists, for every $i\ge 1$, some $n_i\ge 0$ and $\lambda_i e_{f_i}\in \init(M_i)(R^{n_i})\backslash \init(M_{i-1})(R^{n_i})$. Because $\preceq$ is a well partial ordering, there exists an infinite sequence $i_0<i_1<i_2<\dots$ such that
\[f_{i_0} \preceq f_{i_1} \preceq f_{i_2} \preceq \dots\]
Since $\KK$ is Noetherian, we can choose $m$ such that $\lambda_{i_m} = \sum_{j=0}^{m-1} c_j\lambda_{i_j}$ for $c_j\in\KK$. For each $0\le j\le m-1$, let $x_j\in M_{i_j}(R^{n_{i_j}})$ such that $\init(x_j) = \lambda_{i_j}e_{f_{i_j}}$. By Lemma \ref{OrI_order}, there exists $\phi_j \in \Hom_{\OOrIsq}(R^{i_j}, R^{i_m})$ such that $\phi_j f_{i_j} = f_{i_{m}}$ and for any $f_{i_j}' < f_{i_j}$ in the same Hom set, $\phi_j f_{i_j} < f_{i_{m}}$.

Consider the element $X = \sum_{j=0}^{m-1} c_j \phi_j x_j$, which belongs to $M_{i_{m}-1}(R^{i_m})$. Then the properties in Lemma \ref{OrI_order} implies that $\init(X) = \lambda_{i_m} e_{f_{i_m}} \notin M_{i_{m}-1}(R^{i_m})$, contradiction.
\end{proof}

Finally, we are ready to show that the category of $\OrIsq(R)$-modules is locally Noetherian.

\begin{thm}
\label{OrIsq_noetherian}
Let $R$ be a finite commutative ring. The category of $\OrIsq(R)$-modules is locally Noetherian.
\end{thm}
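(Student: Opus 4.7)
The plan is to apply Lemma \ref{Noetherian_transfer} to the inclusion functor
$\iota\colon \OOrIsq(R) \hookrightarrow \OrIsq(R)$,
taking Theorem \ref{OOrI_noetherian} as input.

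First, I would verify that $\iota$ is essentially surjective. Given any $(V,B)\in \OrIsq(R)$, the condition that $\det B$ projects to a square in each residue field, together with Proposition \ref{bilformlocalrings} and Corollary \ref{finite_ring_decomp}, implies that $(V,B)$ is isometric to $(R^{\rk V}, B_{\sq})$. Hence every object of $\OrIsq(R)$ is isomorphic to one coming from $\OOrIsq(R)$.

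Next, I would verify that $\iota$ is finite: for each $X \in \OrIsq(R)$, the $\OOrIsq(R)$-module $\iota^{*} P_{\OrIsq(R),X}$ is finitely generated. By essential surjectivity, we may take $X = (R^d, B_{\sq})$. For $Y = (R^n, B_{\sq}) \in \OOrIsq(R)$, Lemma \ref{OrI_factor_lem} produces, for each $f \in \Hom_{\OrIsq(R)}(X,Y)$, a unique factorization $f = f_1 f_2$ with $f_2\colon (R^d, B_{\sq}) \to (R^d, \beta)$ an isometric isomorphism ($\beta$ of square determinant) and $f_1\colon (R^d, \beta) \to (R^n, B_{\sq})$ row-adapted. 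Grouping by the intermediate pair $(\beta, f_2)$ yields a natural isomorphism of $\OOrIsq(R)$-modules
\[
\iota^{*} P_{\OrIsq(R),X} \;\cong\; \bigoplus_{\beta}\, \bigoplus_{f_2}\, Q_{d, \beta},
\]
where $\beta$ runs over the finite set of symmetric forms on $R^d$ with square determinant, $f_2$ runs over the finite set of isometric isomorphisms $(R^d, B_{\sq}) \to (R^d, \beta)$, and $Q_{d, \beta}$ is as in Theorem \ref{OOrI_noetherian}. This decomposition is $\OOrIsq(R)$-equivariant because post-composition by a row-adapted isometry respects the factorization (the composition of row-adapted maps is row-adapted). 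Each $Q_{d, \beta}$ is Noetherian, hence finitely generated, by Theorem \ref{OOrI_noetherian}, and both index sets are finite because $R$ is finite; therefore $\iota^{*} P_{\OrIsq(R),X}$ is finitely generated.

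With both conditions in hand, Lemma \ref{Noetherian_transfer} then yields the desired local Noetherianity for the category of $\OrIsq(R)$-modules. The main technical step is the equivariant decomposition above; it is entirely driven by Lemma \ref{OrI_factor_lem}, which is precisely designed to reduce arbitrary isometries to pairs (isomorphism, row-adapted isometry), and by the finiteness of $R$, which keeps the set of intermediate forms $\beta$ finite.
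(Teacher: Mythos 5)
Your proposal is correct and follows essentially the same route as the paper: both reduce to showing the inclusion $\OOrIsq(R)\hookrightarrow\OrIsq(R)$ is finite and essentially surjective via Lemma \ref{Noetherian_transfer}, and both use the unique factorization of Lemma \ref{OrI_factor_lem} to express $\iota^{*}P_{\OrIsq(R),X}$ in terms of the modules $Q_{d,\beta}$ of Theorem \ref{OOrI_noetherian} (the paper states only a surjection from the direct sum, while you upgrade it to an equivariant isomorphism, which the uniqueness of the factorization indeed justifies).
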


\begin{proof}
By Lemma \ref{Noetherian_transfer} and Theorem \ref{OOrI_noetherian}, it suffices to show that the inclusion functor $\Phi: \OOrIsq(R) \to \OrIsq(R)$ is finite (the essential surjectivity of $\Phi$ is obvious). Fix $d,B$, and let $M=P_{\OrIsq(R), (R^{d},B)}$, it suffices to prove that the $\OOrIsq(R)$-module $\Phi^* = M\circ\Phi$ is finitely generated. Recall that by Theorem \ref{OOrI_noetherian}, $Q_{d,B'}$ is finitely generated for any symmetric form $B'$ with square determinant on $R^d$. If we fix $B'$ and an isometry $\tau: (R^{d},B) \to (R^{d},B')$, then we get a natural transformation $Q_{d,B'}\to \Phi^*$, and the map
\[\bigoplus_{B'} \bigoplus_{\tau} Q_{d,B'}  \to \Phi^*(M)\]
is surjective by Lemma \ref{OrI_factor_lem}. It follows then that $\Phi^*(M) = M\circ \Phi$ is finitely generated, as desired.
\end{proof}

\subsection{The general case}

In the general case, we define the following subcategories of $\OrI(R)$: for each nonempty subset $I$ of $\{1,2,\dots,q\}$, let $\OrI_I(R)$ be the full subcategory of $\OrI(R)$ spanned by the objects $(V,B)$, where $\det B$ is a nonsquare in the residue field of $R_i$ for all $i\in I$, and is a square otherwise. By Corollary \ref{finite_ring_decomp}, there are $2^q - 1$ of these categories. For each of them, we will show:

\begin{thm} \label{OrInsq_noetherian}
Let $R$ be a finite commutative ring. The category of $\OrI_I(R)$-modules is locally Noetherian.
\end{thm}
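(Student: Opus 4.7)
The plan is to mirror the proof of Theorem \ref{OrIsq_noetherian}, adapting the chain of auxiliary constructions (standard form, well partial order, auxiliary category, initial-term argument) to accommodate the nonsquare determinant pattern $I$. The key idea is to isolate the ``twist'' at a single coordinate which is preserved by every insertion operation.

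First I would fix a unit $u \in R^\times$ whose projection to the residue field of $R_i$ is nonsquare exactly when $i \in I$ (possible by the Chinese Remainder Theorem), let $B_{I,1}$ be the rank-$1$ form on $R$ with $B_{I,1}(1,1) = u$, and set the standard form $B_{I,n} = B_{I,1} \oplus I_{n-1}$ on $R^n$. I would then define analogs $\mathcal{O}'_I(R)$ and $\mathcal{O}_I(R)$ of $\OOrIsq'(R)$ and $\OOrIsq(R)$: objects of $\mathcal{O}'_I(R)$ are $(R^n, B)$ with $\det B$ of pattern $I$ and morphisms are row-adapted isometries, and $\mathcal{O}_I(R)$ is the full subcategory with objects $(R^n, B_{I,n})$. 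The factorization lemma (analog of Lemma \ref{OrI_factor_lem}) carries over with the same proof.

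Next, on $\mathcal{P}^I_R(d, B) = \bigsqcup_{n \geq 1} \Hom_{\mathcal{O}'_I(R)}((R^d, B), (R^n, B_{I,n}))$, I would define $f \preceq g$ by deletion of rows at positions $I_0 \subseteq [n'] \setminus S_r(g)$ subject to the extra requirement $1 \notin I_0$, so that the first row of $g$ is never removed. Under the embedding into $\Sigma^*$ with $\Sigma = R^d \sqcup \{\bullet\}$ used in Lemma \ref{OrI_order}, this is the subword order with position $1$ fixed; partitioning words by their first letter reduces well-quasi-orderedness to finitely many copies of the usual Higman order. The total order extension proceeds as before. The corresponding insertion lemma (analog of Lemma \ref{OrI_insertion}) is then proved by the same formula for $\phi$: the constraint $1 \notin I_0$ forces the first row of $\phi$ to be $e_1 \in R^n$, and a direct computation yields $B_{I,n'}(\phi(\alpha), \phi(\beta)) = B_{I,n}(\alpha, \beta) + \sum_{i \in I_0} (g_i \cdot \alpha')(g_i \cdot \beta')$, where the last sum vanishes by the same argument as in Lemma \ref{OrI_insertion} (now applied to $B_{I,n}$ and $B_{I,n'}$, using that $f_1 = g_1$ because $j_1 = 1$, so the ``twisted'' coefficient $u$ cancels on both sides of the isometry identity for $f$ and $g$).

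With the insertion lemma in hand, the initial-term argument of Theorem \ref{OOrI_noetherian} carries over without change, yielding local Noetherianity of $\mathcal{O}_I(R)$-modules. The inclusion $\mathcal{O}_I(R) \hookrightarrow \OrI_I(R)$ is essentially surjective by Corollary \ref{finite_ring_decomp} and finite by the factorization lemma, so Lemma \ref{Noetherian_transfer} completes the proof. The main obstacle is the bookkeeping in the third step: one must isolate the twist at a single coordinate and ensure that the partial order preserves it, and then check both that the restricted partial order remains well-quasi-ordered and that the isometry verification in the insertion lemma still goes through for $B_{I,n}$. Once the setup is correct, however, everything reduces to the identity-form case already handled in Section \ref{noether}.
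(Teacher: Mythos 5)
Your proposal is correct in outline, but it takes a genuinely different and much heavier route than the paper. The paper does not redo any of the combinatorics: it defines a functor $\Phi:\OrIsq(R)\to\OrI_I(R)$ by $(V,B)\mapsto (V,B)\oplus(R,X)$, where $X$ is a rank-one form whose determinant has nonsquare residue exactly at the indices in $I$, observes that $\Phi$ is essentially surjective (by the classification of forms) and finite, and then invokes Lemma \ref{Noetherian_transfer} together with the already-established Theorem \ref{OrIsq_noetherian}. That is the whole proof --- the nonsquare case is absorbed into the square case by stabilizing with a single twisted summand. You instead rebuild the entire Gr\"obner-type apparatus for a twisted standard form $B_{I,n}=\operatorname{diag}(u,1,\dots,1)$: a new auxiliary category of row-adapted isometries, a modified deletion order that never removes the first row, a corresponding insertion lemma, and the initial-term argument. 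Your key technical points check out: fixing position $1$ forces $f_1=g_1$, so the $u$-weighted term cancels in the isometry identity and the inserted rows contribute zero exactly as in Lemma \ref{OrI_insertion}; and partitioning by the first letter reduces the restricted subword order to finitely many copies of Higman's order, so well-partial-orderedness survives. The trade-off is clear: your argument is self-contained and makes the twisted auxiliary category $\mathcal{O}_I(R)$ locally Noetherian in its own right (a slightly stronger intermediate statement), at the cost of repeating several pages of combinatorics; the paper's transfer argument is two paragraphs, though its verification that $\Phi$ is finite is terser than what you would need to write out. If you pursue your route, do make sure to spell out the non-local case (decomposing $R$ into local factors and taking the product order, as in the end of the proof of Lemma \ref{OrI_order}), which your sketch leaves implicit.
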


\begin{proof}
Define a functor $\Phi: \OrIsq(R) \to \OrI_I(R)$, sending
\[(V,B) \mapsto (V, B)\oplus(R,X)\]
where $X$ is the form on a 1-dimensional $R$-module sending $1$ to $x = (x_1,x_2,\dots,x_q) \in R$, where $x_i$ is an arbitrary nonsquare in the residue field of $R_i$ if $i\in I$, and is 1 otherwise. This functor sends every isometry $f: (V,B)\to(W,B')$ in $\OrIsq(R)$ to the isometry $\Phi(f): (V,B)\oplus(R,X) \to (W,B')\oplus(R,X)$ where the last component is preserved. By Lemma \ref{Noetherian_transfer}, because $\Phi$ is clearly essentially surjective, it suffices to check that it is finite, i.e. for any fixed $(V,B)\in\OrI_I(R)$, the $\OrIsq(R)$-module $P$ mapping
\[(W,B') \mapsto \KK[\Hom_{\OrI_I(R)}((V,B),(W,B')\oplus(R,X))]\]
is finitely generated.

Consider the representable $\OrI_I(R)$-module $P_{\OrI_I(R), (V,B)}$. This is clearly finitely generated. Pick a set of generators $x_1,\dots,x_n$. Then the same elements also generate $P$, which implies that it is finitely generated.
\end{proof}

Using Theorems \ref{OrIsq_noetherian} and \ref{OrInsq_noetherian}, we now give the proof of Theorem \ref{OrI_noetherian}.

\begin{proof}[Proof of Theorem \ref{OrI_noetherian}]
Suppose that there exists a rising chain of $\OrI(R)$-submodules
\[M^1\subsetneq M^2 \subsetneq \dots \subset M\]
for some $\OrI(R)$-module $M$.
Restricting $M$ and its submodules $M^i$ to $\OrIsq$, we get a chain of $\OrIsq(R)$-submodules
\[M^1_{\rm sq}\subsetneq M^2_{\rm sq} \subsetneq \dots \subset M_{\rm sq}.\]
Theorem \ref{OrIsq_noetherian} implies that this chain must stabilize at some finite $N_{\rm sq}$. Similarly, restricting to each $\OrI_I$, we get a chain of $\OrI_I(R)$-submodules
\[M^1_I\subsetneq M^2_I \subsetneq \dots \subset M_I.\]
Theorem \ref{OrInsq_noetherian} implies that each such chain must stabilize at some finite $N_I$. Therefore, the original chain must also stabilize at a finite point, namely $1 + \max(N_{\rm sq}, N_I)$.
\end{proof}

\section{Asymptotic Structure Theorem}

\label{asymp}

\subsection{Preliminaries}

Let $R$ be a finite commutative ring where 2 is a unit. Recall that a \emph{complemented category} is a symmetric monoidal category $(\Cl, \otimes)$ satisfying that:
\begin{itemize}
    \item The identity object in $\Cl$ is initial (hence there are \emph{canonical morphisms} $V\to V\otimes V'$ and $V'\to V\otimes V'$);
    \item Every morphism in $\Cl$ is a monomorphism;
    \item The map $\Hom_{\Cl}(V\otimes V', W) \to \Hom_{\Cl}(V,W) \times \Hom_{\Cl}(V',W)$, defined by composing with the canonical morphisms, is injective;
    \item For every subobject $V$ of $W$, there exists a unique subobject $V'$ of $W$ such that there is an isomorphism $V\otimes V' \to W$, which satisfies that the compositions $V \to V\otimes V' \to W$ and $V' \to V\otimes V' \to W$ are the inclusions.
    \item The map $S_n \wr \Aut_{\Cl}(V) \to \Aut_{\Cl}(V^n)$ is injective.
\end{itemize}
Furthermore, a complemented category $\Cl$ is \emph{generated by} an object $X$ if any object is isomorphic to $X^n$ for an unique $n\ge 0$. We recall the following theorem, whose three-part structure reflects a parallel with the multiplicity stability theorem described in $\mathsection 1$: 

\begin{thm}[Theorem F, \cite{Putman_2017}]
\label{asy0}
Let $(\Cl,\otimes)$ be a complemented category with a generator $X$; assume that the category of $\Cl$-modules is locally Noetherian, and let $M$ be a finitely generated $\Cl$-module. For $N\ge 0$, denote by $\Cl^N$ the full subcategory of $\Cl$ generated by all objects of $X$-rank at most $N$. Then:
\begin{itemize}
    \item (Injective stability) If $f:V\to W$ is a morphism in $\Cl$, then $M(f): M(V)\to M(W)$ is injective when the $X$-rank of $V$ is sufficiently large.
    \item (Surjective stability) If $f:V\to W$ is a morphism in $\Cl$, then the orbit under $\Aut_{\Cl}(W)$ of the image of $M(f)$ spans $M(W)$ when the $X$-rank of $V$ is sufficiently large.
    \item (Central stability) For $N$ sufficiently large, the functor $M$ is the left Kan extension to $\Cl$ of the restriction of $M$ to $\Cl^N$.
\end{itemize}
\end{thm}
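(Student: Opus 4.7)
My plan is to establish the three conclusions in the order listed, since surjective stability feeds into the injective stability argument, and central stability follows from combining the two via a Kan extension computation. Fix a finite generating set $x_1,\dots,x_m$ of $M$ supported at objects $X^{n_1},\dots,X^{n_m}$, and set $N_0 = \max_i n_i$.

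For \emph{surjective stability}, given a morphism $f:V\to W$ with $\rk V \ge N_0$, any element of $M(W)$ is a $\KK$-linear combination of elements of the form $M(h)(x_i)$ with $h: X^{n_i}\to W$. Since $\rk V \ge n_i$, there exists a morphism $g: X^{n_i}\to V$, so that both $f\circ g$ and $h$ are monomorphisms $X^{n_i}\hookrightarrow W$. The complemented structure, combined with the fact that $\Cl$ is generated by $X$ (so that the unique complements of the images of $f\circ g$ and $h$ are both isomorphic to $X^{\rk W - n_i}$), should produce an automorphism $\sigma\in\Aut_\Cl(W)$ with $\sigma\circ f\circ g = h$, placing $M(h)(x_i)$ inside the $\Aut_\Cl(W)$-orbit of the image of $M(f)$. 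For \emph{injective stability}, I would introduce the submodule $K\subseteq M$ defined by $K(V)=\ker\bigl(M(\iota_V)\bigr)$, where $\iota_V: V\to V\otimes X$ is the canonical inclusion; functoriality of $\otimes$ makes $K$ a genuine submodule. By local Noetherianity, $K$ is finitely generated, and then applying the surjective stability already proved to $K$ should force $K$ to vanish at sufficiently high rank: elements of $K$ at high rank would have to arise from low-rank generators of $K$, whose images can be pushed into a range where $M$ on the canonical inclusion is manifestly injective, yielding a contradiction. One then upgrades injectivity of $M(\iota_V)$ to injectivity of $M(f)$ for an arbitrary $f:V\to W$ by factoring $f$, up to an automorphism of $W$, through an iterated canonical inclusion $V\to V\otimes X^{\otimes k}$, which the complemented axioms permit.

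For \emph{central stability}, I would construct the canonical natural transformation $\eta: \mathrm{Lan}_{\iota_N}(M|_{\Cl^N})\to M$ from the universal property of left Kan extension and check that it is an isomorphism once $N$ exceeds the stability thresholds produced in the first two parts. Surjectivity of $\eta_W$ is essentially immediate from surjective stability, since any element of $M(W)$ is hit from $M(V)$ for some $V\in\Cl^N$ equipped with a morphism into $W$. Injectivity requires unwinding the coend presentation of the Kan extension and using injective stability together with the orbit description of the previous step to show that any two ``formal'' lifts from rank $\le N$ that map to the same element of $M(W)$ are already identified inside the coend. The principal obstacle I anticipate is the injective stability step: ensuring that $K$ is a well-behaved submodule, verifying that it vanishes at large rank via the interaction of local Noetherianity with surjective stability, and above all bootstrapping from injectivity on the single canonical inclusion to injectivity on arbitrary morphisms is where the complemented structure has to do its real work.
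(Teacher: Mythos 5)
This theorem is not actually proved in the paper: it is quoted as Theorem F of \cite{Putman_2017} and used as a black box. The fair comparison is therefore with the paper's own proofs of the analogous statements for $\OrI(R)$ in $\mathsection$\ref{asymp} (Theorems \ref{surj}, \ref{central} and Corollary \ref{inj}), which follow the Putman--Sam template. Your first two bullets are essentially that argument. For surjective stability, transitivity of $\Aut_{\Cl}(W)$ on $\Hom_{\Cl}(X^{n_i},W)$ is exactly the content of Lemma \ref{transitive} together with Corollary \ref{morphism_factor}, and your use of it is correct. For injective stability, your submodule $K(V)=\ker M(\iota_V)$ (with $\iota_V:V\to V\otimes X$ the canonical morphism) is a mild variant of the paper's torsion submodule $M_T$; it is indeed a submodule by naturality of $\iota$, and the vanishing mechanism is the right one: $K$ is finitely generated by local Noetherianity, every morphism from a generator's support object into an object of sufficiently large rank factors (up to $\Aut$) through $\iota$, so the generators and hence all of $K$ die at high rank, and iterating the canonical inclusion handles a general $f:V\to W$. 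These parts are sound.

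The gap is in central stability, which is the hard part of the theorem. Surjectivity of $\eta_W:\mathrm{Lan}(M|_{\Cl^N})(W)\to M(W)$ does follow from surjective stability, but injectivity of $\eta_W$ is where all the content lies, and ``unwinding the coend and using injective stability'' is not an argument: you must show that whenever $x\in M(V)$, $x'\in M(V')$ with $V,V'\in\Cl^N$ satisfy $M(f)(x)=M(f')(x')$ in $M(W)$, the classes $[x,f]$ and $[x',f']$ are already identified by a finite zig-zag inside the comma category $(p\downarrow W)$. Injective stability of $M$ gives no control over this identification problem, and the comma category is not obviously filtered in a way that would make it automatic. The proof in the paper (and in \cite{Putman_2017}) goes through a genuinely different device that your sketch omits entirely: the auxiliary modules $\Sigma_n M(V)=\bigoplus_{h:X^n\to V}M(C_h)$ (the paper's $\Sigma_{I,n}M$), the verification that they are finitely generated, and, crucially, the exactness of $\Sigma_2 M(V)\to\Sigma_1 M(V)\to M(V)\to 0$ for $\rk V\gg 0$. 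That exactness is itself nontrivial: one shows the homology of the complex $\Sigma_\bullet M$ is a torsion module by proving the stabilization map on this homology is zero (the argument of Lemma 3.11 of \cite{Putman_2017}), and then invokes the torsion-vanishing lemma again. Only then does the surjectivity of $\Sigma_1 M'(V)\to M'(V)$ and a diagram chase yield that $\phi_V$ is an isomorphism. Without a substitute for this resolution, the central stability step of your proposal does not go through.
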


Because the category $\OrIsq(R)$ is a complemented category generated by the 1-dimensional $R$-module $(R, 1)$, and the category of $\OrIsq(R)$-modules is locally Noetherian by Theorem \ref{OrIsq_noetherian}, Theorem \ref{asy0} holds for $\OrIsq(R)$. In this section, we will prove an analogous theorem for $\OrI(R)$ (Theorems \ref{surj} and \ref{central} and Corollary \ref{inj}). 

\subsection{Surjective stability}

For simplicity, we will use $\A$ in place for $\OrI(R)$ in this section.

\begin{lem}
The category $\A = \OrI(R)$ is a complemented category.
\end{lem}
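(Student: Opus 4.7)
My plan is to equip $\A = \OrI(R)$ with the symmetric monoidal structure given by orthogonal direct sum, $(V,B_V) \oplus (V',B_{V'}) = (V \oplus V',\, B_V \oplus B_{V'})$, taking the zero module $(0,0)$ as the unit object. With this structure fixed, I would verify each of the five defining properties of a complemented category in turn. Four of them are essentially formal, and all the real content lies in the existence and uniqueness of complements, which rests on the classical orthogonal complement decomposition.

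For the routine checks: the zero module is initial because the trivial map is the unique isometry from $(0,0)$ into any orthogonal $R$-module, and this produces the canonical morphisms $V \to V \oplus V'$ and $V' \to V \oplus V'$ as the usual coordinate inclusions. Every morphism in $\A$ is a monomorphism because isometries are injective, as noted after the definition of isometry in $\mathsection 2$. The evaluation map $\Hom_{\A}(V \oplus V', W) \to \Hom_{\A}(V, W) \times \Hom_{\A}(V', W)$ is injective by $R$-linearity: any two isometries $V \oplus V' \to W$ that agree on both summands agree everywhere. Finally, the wreath-product map $S_n \wr \Aut_{\A}(V) \to \Aut_{\A}(V^n)$ is injective since if $(\sigma;\phi_1,\dots,\phi_n)$ acts as the identity on $V^n$, testing on elements supported in a single coordinate forces $\sigma = 1$, after which each $\phi_i$ must be the identity.

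The main obstacle is the complements axiom. Given a subobject, represented by an isometry $\iota:(V,B_V) \hookrightarrow (W,B_W)$, I would set $V_0 = \iota(V) \subseteq W$ and take the complement to be the orthogonal complement $V_0^{\perp} = \{w \in W : B_W(v,w) = 0 \text{ for all } v \in V_0\}$, endowed with the restricted form. Since $B_W|_{V_0}$ is isometric to $B_V$ it is nondegenerate, so it induces an isomorphism $V_0 \to V_0^{\ast}$; hence for any $w \in W$ there is a unique $v_w \in V_0$ satisfying $B_W(v,w) = B_W(v,v_w)$ for all $v \in V_0$, yielding the decomposition $w = v_w + (w - v_w)$ with $w - v_w \in V_0^{\perp}$. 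Combined with $V_0 \cap V_0^{\perp} = 0$ (again by nondegeneracy), this gives $W = V_0 \oplus V_0^{\perp}$ as $R$-modules, and the decomposition is orthogonal by construction, so the restricted form on $V_0^{\perp}$ is nondegenerate (it would otherwise produce a nonzero element of $W$ orthogonal to all of $W$). To confirm that $V_0^{\perp}$ is a legitimate object of $\A$, I would decompose $R$ into a product of finite local rings via Proposition \ref{decomp}; as a direct summand of $W$, the module $V_0^{\perp}$ is projective, hence free over each local factor, and the rank in each factor equals $\rk(W) - \rk(V_0)$, so $V_0^{\perp}$ is free over $R$. For uniqueness, any complement $V'$ arising from an isomorphism $V_0 \oplus V' \cong W$ compatible with the inclusions is forced, by orthogonality of the two summands in the direct sum, to lie inside $V_0^{\perp}$; an equality of ranks then forces $V' = V_0^{\perp}$.
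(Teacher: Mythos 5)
Your proof is correct and follows the same route as the paper, which simply declares the monoidal structure to be the orthogonal direct sum and the complement of a subobject to be its orthogonal complement $V_0^{\perp}$. You supply the details the paper leaves implicit — in particular the nondegeneracy of the restricted form and the freeness of $V_0^{\perp}$ over a product of local rings via rank-counting on each local factor, which is the one genuinely non-formal point — so nothing further is needed.
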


\begin{proof}
The monoidal structure is given by the orthogonal direct sum, and for a free orthogonal submodule $V\subseteq W$ the complement of $V$ is given by $V^{\perp} = \{w\in W \mid B(v,w) = 0\}$, where $B$ is the nondegenerate symmetric form equipped on $W$.
\end{proof}

We remark that $\A$ is generated by not one generator, but instead $2^q$, one for each subset $I\subseteq \{1,2,\dots,q\}$: we denote by $\XX_I$ the 1-dimensional free $R$ module equipped with the bilinear form $x = (x_1,\dots,x_q)$ where $x_i$ is a fixed nonsquare in the residue field of $R_i$ if $i\in I$, and $x_i = 1$ otherwise. The fact that they generate $\A$ follows from Proposition \ref{finite_ring_decomp}. We also remark that $\XX_I^2$ is isomorphic to $\XX_{\varnothing}^2$, because in the residue fields of $R_i$, the product of two nonsquares is a square.

\begin{lem}
\label{transitive}
For $V,W\in \A$, $\Aut_{\A}(W)$ acts transitively on $\Hom_{\A}(V,W)$. 
\end{lem}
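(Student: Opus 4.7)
The plan is to produce, for each pair $f,g\in\Hom_{\A}(V,W)$, an automorphism $\phi\in\Aut_{\A}(W)$ with $\phi\circ f = g$, via a Witt-style argument exploiting the complemented structure just established in the preceding lemma.

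First, since $\A$ is complemented, $W$ decomposes orthogonally both as $W \cong f(V)\oplus f(V)^{\perp}$ and as $W \cong g(V)\oplus g(V)^{\perp}$. Because $f$ and $g$ are themselves isometries, they give isomorphisms of orthogonal modules $V\cong f(V)$ and $V\cong g(V)$, so $V\oplus f(V)^{\perp} \cong W \cong V\oplus g(V)^{\perp}$ as orthogonal $R$-modules.

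The crux is a Witt cancellation step: I claim $f(V)^{\perp} \cong g(V)^{\perp}$. Combining Proposition \ref{decomp}, Proposition \ref{bilformlocalrings}, and Corollary \ref{finite_ring_decomp}, an orthogonal $R$-module is classified up to isometry by two invariants: its rank, and the discriminant class of $\det B$ modulo squares in the residue field of each local factor $R_i$. Ranks add and determinants multiply under orthogonal direct sum, so these invariants behave additively (in rank) and multiplicatively modulo squares. Hence from $V\oplus f(V)^{\perp} \cong V\oplus g(V)^{\perp}$ the two invariants of $f(V)^{\perp}$ and of $g(V)^{\perp}$ must coincide, yielding an isometry $\psi:f(V)^{\perp} \to g(V)^{\perp}$.

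Finally, define $\phi := (g\circ f^{-1})\oplus \psi$, viewed as a map from $f(V)\oplus f(V)^{\perp}$ to $g(V)\oplus g(V)^{\perp}$. This is an orthogonal direct sum of isometries, hence an isometry of $W$, i.e.\ an element of $\Aut_{\A}(W)$, and for any $v\in V$ one computes $\phi(f(v)) = (g\circ f^{-1})(f(v)) = g(v)$, establishing $\phi\circ f = g$. The main obstacle is the cancellation step, which reduces via the classification in Proposition \ref{bilformlocalrings} (and its product-ring form in Corollary \ref{finite_ring_decomp}) to a routine comparison of rank and discriminant invariants; no further input beyond the material already developed in $\mathsection 2$ is needed.
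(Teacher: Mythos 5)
Your proof is correct and follows essentially the same route as the paper: decompose $W$ as $f(V)\oplus f(V)^{\perp}$ and $g(V)\oplus g(V)^{\perp}$, identify the two pieces, and paste the isometries together. The only difference is that you explicitly justify the cancellation step $f(V)^{\perp}\cong g(V)^{\perp}$ via the rank-and-discriminant classification, whereas the paper simply asserts it; your added detail is a welcome clarification rather than a divergence.
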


\begin{proof}
Suppose we are given morphisms $f,g \in \Hom_{\A}(V,W)$. Suppose the complements of $f(V)$ and $g(V)$ in $W$ are respectively $U, U'$. The orthogonal modules $f(V),g(V)$ are both isomorphic to $V$, and the orthogonal modules $U, U'$ are isomorphic as well. So we get the automorphism
\[\phi: W \cong f(V)\oplus U \xrightarrow{\cong} g(V)\oplus U' \cong W\]
satisfies $\phi f = g$, as desired.
\end{proof}

\begin{cor}
\label{morphism_factor}
Let $f:U\to V$ and $g: U\to W$ be morphisms in $\A$, such that $\rk V < \rk W$. Then there exist a morphism $h: V\to W$ such that $h\circ f = g$.
\end{cor}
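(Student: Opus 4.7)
The plan is to reduce the statement to showing that $\Hom_{\A}(V,W)$ is nonempty. Indeed, given any morphism $h' : V \to W$ in $\A$, both $h' \circ f$ and $g$ lie in $\Hom_{\A}(U,W)$, so Lemma \ref{transitive} supplies an automorphism $\phi \in \Aut_{\A}(W)$ with $\phi \circ h' \circ f = g$, and then $h := \phi \circ h'$ is the desired morphism. So the real task is to produce an isometric embedding $V \hookrightarrow W$ whenever $\rk V < \rk W$.

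For this I would invoke the classification encoded in Corollary \ref{finite_ring_decomp}: an orthogonal $R$-module is determined up to isometry by its rank together with the subset $I \subseteq \{1,\ldots,q\}$ consisting of those $i$ for which $\det B$ projects to a nonsquare in the residue field of $R_i$. Write $J$ for the class of $V$ and $I$ for that of $W$, and set $k = \rk W - \rk V \geq 1$. By Proposition \ref{bilformlocalrings} applied componentwise, there exists an orthogonal $R$-module $V''$ of rank $k$ having any prescribed class; take its class to be $I \triangle J$. Because the class of a direct sum is the symmetric difference of the classes of its summands --- in each residue field, the product of two squares or two nonsquares is a square, while a square times a nonsquare is a nonsquare --- the module $V \oplus V''$ has rank $\rk W$ and class $J \triangle (I \triangle J) = I$, so it is isometric to $W$. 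The composite $V \hookrightarrow V \oplus V'' \xrightarrow{\sim} W$ then gives the required $h'$.

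The only subtlety, and the reason behind the strict inequality $\rk V < \rk W$, is that the complement $V''$ must have positive rank in order for its class to be freely chosen; with $\rk V = \rk W$ the two modules could lie in genuinely different classes and admit no isometry between them at all. Beyond securing this rank gap via the classification, the argument is routine bookkeeping once Lemma \ref{transitive} is in hand, so I do not expect serious obstacles.
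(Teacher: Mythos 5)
Your proposal is correct and follows essentially the same route as the paper: reduce to the existence of some isometry $V\to W$ via the transitivity of the $\Aut_{\A}(W)$-action (Lemma \ref{transitive}), and produce that isometry from the classification by realizing $W$ as $V$ plus a complement of the appropriate square/nonsquare class. If anything, you are slightly more careful than the paper's own proof, which writes the complement as the rank-one module $(R,\XX_{I_V\triangle I_W})$ and thus implicitly treats only the case $\rk W=\rk V+1$, whereas you correctly allow a complement of arbitrary positive rank.
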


\begin{proof}
Because $\rk V < \rk W$, we claim there exists a morphism $i:V\to W$. To see this, suppose $B,B'$ are the bilinear forms attached to $V$ and $W$, and $\det B$ is nonsquare in $R_i$ for $i\in I_V$ while $\det B'$ is nonsquare in $R_i$ for $i\in I_W$. Then $W\cong V\oplus (R,\XX_{I_V\triangle I_W})$ ($\triangle$ means set XOR). 

By Lemma \ref{transitive}, there exists an automorphism $k:W\to W$ such that $k\circ g = i\circ f$, hence $g = (k^{-1}\circ i)\circ f$.
\end{proof}

\begin{thm}[Surjective stability]
\label{surj}
Let $M$ be a finitely generated $\A$-module. Then surjective stability holds: for an $\A$-morphism $f: V\to W$ with $\rk V$ large enough, the image of $M(f)$ spans $M(W)$ under $M(\Aut_{\A}(W))$.
\end{thm}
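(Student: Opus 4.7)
The plan is to leverage finite generation of $M$ to reduce the theorem to showing that, for each generator $x_i \in M(V_i)$ and each morphism $\phi_i : V_i \to W$, the element $M(\phi_i)(x_i)$ lies in the $\Aut_\A(W)$-orbit of the image of $M(f)$. Once this is established, the theorem follows because, by Definition \ref{def_fin_gen}, $M(W)$ is $\KK$-spanned by the collection of all such $M(\phi_i)(x_i)$ as $i$ and $\phi_i$ vary.

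Concretely, I would fix a finite generating set $x_1, \ldots, x_m$ of $M$ with $x_i \in M(V_i)$, and set $N = \max_i \rk V_i$. The first claim is that whenever $\rk V > N$, for each $i$ there exists an $\A$-morphism $g_i : V_i \to V$. Indeed, the classification of orthogonal forms (Proposition \ref{bilformlocalrings} together with Corollary \ref{finite_ring_decomp}) guarantees that, once the rank gap $\rk V - \rk V_i$ is positive, one can find an orthogonal complement $V_i'$ of rank $\rk V - \rk V_i$ whose determinant class equals the appropriate symmetric difference, yielding an isometry $V \cong V_i \oplus V_i'$; this is the same complement construction already used in the proof of Corollary \ref{morphism_factor}. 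The canonical inclusion $V_i \hookrightarrow V_i \oplus V_i'$ then provides $g_i$.

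Given such a $g_i$, the composition $f \circ g_i : V_i \to W$ is itself an $\A$-morphism. By Lemma \ref{transitive}, $\Aut_\A(W)$ acts transitively on $\Hom_\A(V_i, W)$, so there exists $\alpha_i \in \Aut_\A(W)$ with $\alpha_i \circ (f \circ g_i) = \phi_i$. Applying the functor $M$ then yields
\[
M(\phi_i)(x_i) \;=\; M(\alpha_i)\bigl( M(f)(M(g_i)(x_i)) \bigr) \;\in\; M(\alpha_i)\bigl(\operatorname{image}(M(f))\bigr),
\]
so $M(\phi_i)(x_i)$ lies in the $\Aut_\A(W)$-orbit of $\operatorname{image}(M(f))$. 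Writing an arbitrary $w \in M(W)$ as a $\KK$-linear combination of such elements then expresses $w$ in the $\KK$-span of this orbit, completing the proof.

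The only substantive step is the existence of $g_i : V_i \to V$ for $\rk V$ sufficiently large; this hinges on the absence of a determinant-type obstruction whenever the rank gap is positive, which is precisely what the classification provides. Everything else is a formal consequence of finite generation together with the transitivity Lemma \ref{transitive}, so I do not anticipate any serious obstacle beyond this initial verification.
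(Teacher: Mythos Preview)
Your proposal is correct and follows essentially the same approach as the paper's proof: both use finite generation to reduce to generators $x_i \in M(V_i)$, construct morphisms $V_i \to V$ via the classification of orthogonal forms, and then invoke the transitivity Lemma~\ref{transitive} to conclude. The paper packages the factorization step through Corollary~\ref{morphism_factor} (finding $h_i : V \to W$ with $h_i \circ \phi_i = f_i$), which forces a small case split between $\rk V < \rk W$ and $\rk V = \rk W$; your version applies transitivity directly to $\Hom_\A(V_i, W)$ and thereby sidesteps that distinction, but the underlying argument is the same.
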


\begin{proof}
Suppose $f: V\to W$ is any morphism in $\A$. The span of $M(f)$ in $M(W)$, under the action of $M(\Aut_{\A}(W))$, is the span of $M(\phi \circ f)$ as $\phi$ ranges in $\Aut_{\A}(W)$. By Lemma \ref{transitive}, this is the same as the span of $M(h)$ as $h$ ranges in $\Hom_{\A}(V,W)$.

Now, because $M$ is finitely generated, there exists a generating set $\{x_i
\in M(V_i)\}$. Let $r = \max_i \rk V_i$, and consider any object $V$ with rank $\rk V \ge r+1$. Fix maps $\phi_i: V_i \to V$. For any $x\in W$, since $M$ is generated by the $x_i$'s, there exist maps $f_i: V_i \to W$ such that $x$ is in the span of the images of $M(f_i): M(V_i) \to M(W)$. 

If $\rk V < \rk W$, then by Corollary \ref{morphism_factor} there exist maps $h_i: V\to W$ such that $h_i\circ \phi_i = f_i$. This implies that $x$ lies in the span of $M(h_i)$, as desired. If $\rk V = \rk W$, then since there exists a map $f:V\to W$, $V$ must be isomorphic to $W$, so the proof of Corollary \ref{morphism_factor} still applies, and $x$ lies in the span of $M(h_i)$.
\end{proof}

\subsection{Injective and central stability} 

We will now prove injective and central stability.

\begin{Def}
Let $M$ be an $\A$-module. The \emph{torsion submodule} $M_T$ of $M$ is defined by
\[M_T(V) = \{x\in M(V) \mid \exists f: V\to W,\ M(f)(x) = 0\}.\]
\end{Def}

\begin{lem}
The torsion submodule $M_T$ is an $\A$-submodule of $M$.
\end{lem}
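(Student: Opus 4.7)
The plan is to verify the two conditions required for $M_T$ to be an $\A$-submodule of $M$: first, that $M_T(V)$ is a $\KK$-submodule of $M(V)$ for every $V \in \A$; second, that for every morphism $g: V \to W$ in $\A$, the map $M(g)$ sends $M_T(V)$ into $M_T(W)$. Both parts will exploit the same idea, namely combining two witnesses of torsion by building an orthogonal direct sum and invoking the transitivity statement of Lemma \ref{transitive}.

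For the first part, suppose $x, y \in M_T(V)$, so there exist isometries $f_1: V \to W_1$ and $f_2: V \to W_2$ with $M(f_1)(x) = 0$ and $M(f_2)(y) = 0$. I would then consider the orthogonal direct sum $W_1 \oplus W_2$ together with the canonical inclusions $h_1: W_1 \to W_1 \oplus W_2$ and $h_2: W_2 \to W_1 \oplus W_2$, which are isometries by construction of the monoidal structure. The composites $h_1 f_1$ and $h_2 f_2$ are then two morphisms $V \to W_1 \oplus W_2$, and by Lemma \ref{transitive} there is some $\phi \in \Aut_{\A}(W_1 \oplus W_2)$ with $\phi \circ h_1 f_1 = h_2 f_2$. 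Setting $g = h_2 f_2$, one checks directly that $M(g)(x) = M(\phi) M(h_1)(M(f_1)(x)) = 0$ and $M(g)(y) = M(h_2)(M(f_2)(y)) = 0$, so $M(g)$ kills every $\KK$-linear combination of $x$ and $y$. This places such combinations in $M_T(V)$.

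For the second part, let $x \in M_T(V)$ with witness $f: V \to V'$ satisfying $M(f)(x) = 0$, and let $g: V \to W$ be arbitrary. I would form the orthogonal direct sum $U = W \oplus V'$, consider the canonical isometries $i_W: W \to U$ and $i_{V'}: V' \to U$, and apply Lemma \ref{transitive} to the pair of morphisms $i_W \circ g,\, i_{V'} \circ f \in \Hom_{\A}(V, U)$ to obtain an automorphism $\phi$ of $U$ with $\phi \circ i_W \circ g = i_{V'} \circ f$. Taking $h = \phi \circ i_W: W \to U$, one computes $M(h)(M(g)(x)) = M(i_{V'})(M(f)(x)) = 0$, which exhibits $M(g)(x)$ as a torsion element in $M(W)$.

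The only potential obstacle is ensuring that the canonical inclusions into an orthogonal direct sum really are morphisms in $\A$ and that the hypotheses of Lemma \ref{transitive} apply; this is immediate from the definition of the symmetric monoidal structure on $\A$, under which the natural injections into $V \oplus V'$ preserve the bilinear forms. With these observations in place both parts follow routinely, and so $M_T$ is a well-defined $\A$-submodule of $M$.
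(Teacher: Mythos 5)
Your proof is correct and uses the same mechanism as the paper's: embed the two torsion witnesses into an orthogonal direct sum and apply Lemma \ref{transitive} to align them, so that a single morphism annihilates every linear combination. You additionally spell out the functoriality check (that $M(g)$ carries $M_T(V)$ into $M_T(W)$), which the paper leaves implicit; that verification is handled correctly by the same transitivity trick.
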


\begin{proof}
It suffices to show that if $x,y\in M_T(V)$, then $x+y\in M_T(V)$. Suppose $f: V\to W$ and $g: V\to W'$ such that $M(f)(x) = M(g)(y) = 0$. Consider the maps $\imath: W\hookrightarrow W\oplus W'$ and $\imath': W'\hookrightarrow W\oplus W'$. By Lemma \ref{transitive}, there exists $h\in\Aut_{\A}(W\oplus W')$ such that $h\imath' g = \imath f$. This composition maps both $x$ and $y$ to 0, hence also $x+y$.
\end{proof}

\begin{lem}
\label{torsion_vanish}
Let $M$ be finitely generated, then for all $V\in\A$ with $\rk V \gg 0$, $M_T(V) = 0$.
\end{lem}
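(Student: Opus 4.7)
The plan is to leverage local Noetherianity of $\OrI(R)$-modules (Theorem \ref{OrI_noetherian}) together with the factorization property from Corollary \ref{morphism_factor}. The heuristic is that each torsion generator is killed by some morphism into a fixed-rank object, and once $V$ is large enough to factor all morphisms out of the generating ranks through those witnesses, every element of $M_T(V)$ is forced to be zero.

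First I would observe that, since $M$ is finitely generated and the category of $\A$-modules is locally Noetherian by Theorem \ref{OrI_noetherian}, the torsion submodule $M_T \subseteq M$ is itself finitely generated. Fix generators $x_1, \dots, x_n$ with $x_i \in M_T(V_i)$, and for each $i$ use the definition of torsion to choose a morphism $f_i : V_i \to W_i$ in $\A$ such that $M(f_i)(x_i) = 0$. Set $N = \max_i \rk W_i$.

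Next, I would unwind what it means for the $x_i$ to generate $M_T$: for any object $V$, the $\KK$-module $M_T(V)$ is spanned by elements of the form $M(g)(x_i)$, where $g$ ranges over $\Hom_{\A}(V_i, V)$. Indeed, the collection of such spans at each object is closed under post-composition by any morphism (since $M(h)(M(g)(x_i)) = M(h \circ g)(x_i)$), hence defines a submodule, which must be all of $M_T$ by minimality.

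Finally, assume $\rk V > N$. For each generator $x_i$ and each $g \in \Hom_{\A}(V_i, V)$, one has $\rk W_i \le N < \rk V$, so Corollary \ref{morphism_factor} applied to $f_i : V_i \to W_i$ and $g : V_i \to V$ produces a morphism $h : W_i \to V$ with $g = h \circ f_i$. Functoriality then gives $M(g)(x_i) = M(h)(M(f_i)(x_i)) = 0$. Since every spanning element of $M_T(V)$ vanishes, $M_T(V) = 0$, proving the lemma. The only conceptual input beyond routine bookkeeping is the Noetherianity theorem to make $M_T$ finitely generated, and the factorization corollary to move past the finitely many ranks $\rk W_i$; both are already in hand, so I do not anticipate a substantial obstacle.
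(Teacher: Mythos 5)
Your proposal is correct and is exactly the argument the paper intends: local Noetherianity makes $M_T$ finitely generated, and Corollary \ref{morphism_factor} factors any map out of a generator's domain through the witness $f_i : V_i \to W_i$ once $\rk V$ exceeds $\max_i \rk W_i$, killing every spanning element. Your write-up just fills in the details that the paper's two-sentence proof leaves implicit.
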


\begin{proof}
Because the category of $\A$-modules is locally Noetherian and $M_T$ is a submodule of $M$, $M_T$ is finitely generated as well. Then all maps into sufficiently large-rank spaces are zero maps by Corollary \ref{morphism_factor}.
\end{proof}

\begin{cor}[Injective stability]
\label{inj}
Let $M$ be a finitely generated $\A$-module. Then injective stability holds: for an $\A$-morphism $f: V\to W$ with $\rk V$ large enough, $M(f)$ is injective.
\end{cor}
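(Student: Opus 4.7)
The plan is to derive injective stability as an essentially immediate consequence of the torsion vanishing result in Lemma \ref{torsion_vanish}. The key observation is that the definition of the torsion submodule $M_T$ was tailored precisely to capture failures of injectivity: for any element $x \in M(V)$ and any morphism $f \colon V \to W$, membership of $x$ in the kernel of $M(f)$ immediately certifies that $x \in M_T(V)$. Thus the kernels of all outgoing structure maps from $V$ are contained in $M_T(V)$, and the corollary reduces to showing that $M_T(V)$ vanishes for $\rk V$ sufficiently large.

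To carry this out, I would first invoke Lemma \ref{torsion_vanish} to obtain an integer $N$ (depending only on $M$) such that $M_T(V) = 0$ whenever $\rk V \geq N$. Then, given any $\A$-morphism $f \colon V \to W$ with $\rk V \geq N$, I would take an arbitrary $x$ in the kernel of $M(f)$ and observe that $x \in M_T(V) = 0$, so $M(f)$ is injective. Note that the bound $N$ is uniform in $f$ and in $W$, so this gives the full statement of the corollary.

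There is essentially no obstacle here because the work has already been absorbed into Lemma \ref{torsion_vanish}, which in turn relied on the local Noetherianity of $\Rep_{\KK}(\OrI(R))$ (Theorem \ref{OrI_noetherian}) to ensure that $M_T$ is itself finitely generated, together with Corollary \ref{morphism_factor} to push everything into sufficiently high rank. The only conceptual point worth flagging is that the definition of $M_T$ is well-adapted precisely because $\A$ has the property that any two objects admit a common morphism into a larger one (Corollary \ref{morphism_factor}); this is what makes ``$M(f)(x)=0$ for some $f$'' a meaningful and submodule-closed notion of torsion, and it is why the reduction to Lemma \ref{torsion_vanish} is clean.
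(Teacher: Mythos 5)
Your proof is correct and matches the paper's approach exactly: the paper also deduces injective stability directly from Lemma \ref{torsion_vanish}, since any element of $\ker M(f)$ lies in $M_T(V)$ by definition, and this vanishes for $\rk V$ sufficiently large. Your write-up simply makes explicit the one-line observation the paper leaves implicit.
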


\begin{proof}
This is a direct consequence of the above lemma.
\end{proof}

\begin{Def}
Let $n$ be a positive integer, and let $M$ be an $\A$-module. Suppose $I$ is a subset of $\{1,2,\dots,q\}$. Define $\Sigma_{I,n} M$ to be an $\A$-module mapping each $V\in\A$ to
\[\Sigma_{I,n}M(V) = \bigoplus_{h\in \Hom_{\A}(\XX_{I}^n, V)} M(C_h),\]
where $C_h$ denotes the complement of $h(\XX_{I}^n)$ in $V$.
\end{Def}

\begin{lem}
If $M$ is finitely generated, then the $\A$-modules $\Sigma_{I,n} M$ are all finitely generated.
\end{lem}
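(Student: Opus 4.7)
The plan is to reduce to the case of representable $\A$-modules and then identify the shift $\Sigma_{I,n} P_{\A, V_0}$ with a single representable, which is automatically finitely generated.

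First, $\Sigma_{I,n}$ is an exact (in particular surjection-preserving) endofunctor of $\Rep_{\KK}(\A)$: a morphism $M \to N$ of $\A$-modules induces componentwise a morphism $\bigoplus_{h} M(C_h) \to \bigoplus_{h} N(C_h)$, and both direct sums and pointwise evaluation preserve exactness. Now, if $M$ is finitely generated, Lemma \ref{fingen_quotient} produces a surjection $\bigoplus_{j=1}^{k} P_{\A, V_j} \twoheadrightarrow M$ from a finite direct sum of representables. Applying $\Sigma_{I,n}$ yields a surjection $\bigoplus_{j=1}^{k} \Sigma_{I,n} P_{\A, V_j} \twoheadrightarrow \Sigma_{I,n} M$, so it is enough to show each $\Sigma_{I,n} P_{\A, V_j}$ is finitely generated.

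The heart of the argument is the natural isomorphism
\[
\Sigma_{I,n} P_{\A, V_0} \;\cong\; P_{\A,\, \XX_I^n \oplus V_0},
\]
which I would prove by exhibiting a $\KK$-basis bijection at each $V \in \A$. A basis element of the left-hand side at $V$ is a pair $(h, g)$, with $h : \XX_I^n \to V$ an isometry and $g : V_0 \to C_h = h(\XX_I^n)^{\perp}$ an isometry; a basis element of the right-hand side is an isometry $\phi : \XX_I^n \oplus V_0 \to V$. Sending $(h, g)$ to $\phi_{h,g} : (x, y) \mapsto h(x) + g(y)$ defines an isometry of the orthogonal direct sum, since the orthogonality $g(V_0) \perp h(\XX_I^n)$ in $V$ kills the cross terms in $B_V(\phi_{h,g}(x,y), \phi_{h,g}(x',y'))$; conversely, any $\phi$ restricts to the pair $(\phi|_{\XX_I^n}, \phi|_{V_0})$, with the second component landing in $C_{\phi|_{\XX_I^n}}$ because the two summands of $\XX_I^n \oplus V_0$ are orthogonal.

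Granting the isomorphism, $\Sigma_{I,n} M$ is a quotient of the finite direct sum $\bigoplus_{j=1}^{k} P_{\A,\, \XX_I^n \oplus V_j}$ of representables, hence finitely generated by Lemma \ref{fingen_quotient}. I do not foresee a serious obstacle: this is formally parallel to the standard shift-of-a-representable computation in the FI and VIC settings of \cite{Church_2015, Putman_2017}. The only point requiring care is naturality of the bijection under a morphism $f : V \to W$ of $\A$, i.e. that the pushforward $(h, g) \mapsto (f \circ h,\, f|_{C_h} \circ g)$ on the left matches $\phi_{h, g} \mapsto f \circ \phi_{h, g}$ on the right, which is a short direct verification.
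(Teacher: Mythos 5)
Your proof is correct, but it takes a different (more structural) route than the paper's. The paper argues directly with generators: it takes a generating set $x_i \in M(V_i)$, observes that each $M(V_i)$ sits as the direct summand of $\Sigma_{I,n}M(V_i \oplus \XX_I^n)$ indexed by the canonical inclusion $h_0 \colon \XX_I^n \to V_i \oplus \XX_I^n$, and then invokes Lemma \ref{transitive} to move from that one summand to all the others indexed by arbitrary $h \in \Hom_{\A}(\XX_I^n, V)$. You instead reduce to representables via the surjection of Lemma \ref{fingen_quotient} and the (pointwise, hence exact) functoriality of $\Sigma_{I,n}$, and then identify $\Sigma_{I,n} P_{\A, V_0} \cong P_{\A, \XX_I^n \oplus V_0}$ by the canonical bijection $(h,g) \leftrightarrow \phi_{h,g}$; this bijection is well defined precisely because $\A$ is complemented, so $C_h$ carries a non-degenerate restricted form and the cross terms vanish. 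Your identification is the orthogonal analogue of the standard shift-of-a-representable computation for $\mathbf{FI}$- and $\mathbf{VIC}$-modules, and it has the advantage of not needing the transitivity of the $\Aut_{\A}(V)$-action at all, since the decomposition of $\Hom_{\A}(\XX_I^n \oplus V_0, V)$ by restriction to $\XX_I^n$ is canonical; the paper's argument is shorter on the page but leans on Lemma \ref{transitive} to handle the summands your isomorphism accounts for automatically. Both are complete; the one point you rightly flag, naturality in $V$, is indeed the identity $f \circ \phi_{h,g} = \phi_{f\circ h,\, f\circ g}$ and checks out.
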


\begin{proof}
Because $M$ is finitely generated, there exists a set of generators $x_1,\dots,x_m$, which respectively belong to $M(V_1),\dots,M(V_m)$. Notice that each $M(V_i)$ is a direct summand in $\Sigma_{I,n}M(V_i \oplus \XX_I^n)$. We claim that the images of these elements $x_1,\dots,x_m$ also generate $\Sigma_{I,n}M$. For any nonzero $\Sigma_{I,n}M(V)$, $V$ must be isomorphic to $\XX_I^n\oplus Y$ for some orthogonal module $Y$. Clearly, the direct summand $M(Y)$ inside $\Sigma_{I,n}M(\XX_I^n\oplus Y)$ is generated by the claimed set of generators; by Lemma \ref{transitive}, the other direct summands are all generated by them as well.
\end{proof}

We can endow $\Sigma_{I, \bullet} M$ with a chain complex structure as follows. Consider an object $V\in \A$, and we define the maps $d_1,\dots,d_n: \Sigma_{I,n}M(V)\to \Sigma_{I,n-1}M(V)$ as induced by the maps $\XX_{I}^{n-1} \to \XX_{I}^n$ by ``adding'' the $i$th coordinate ($1\le i\le n$). Finally, define $d = d_1-d_2+\dots+(-1)^{n-1}d_n$. It is straightforward to check that $\Sigma_{\bullet} M$ is a chain complex. 

\begin{thm}
Let $M$ be finitely generated. Fix $n$ a positive integer, and $I\subseteq \{1,2,\dots,q\}$. Then the chain complex
\[\Sigma_{I,n}M(V) \to \dots \to \Sigma_{I,1} M(V) \to M(V) \to 0\]
is exact for all $V$ with sufficiently large rank (as free $R$-modules).
\end{thm}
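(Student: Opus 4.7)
The plan is to denote by $H_k$ the $k$-th homology $\A$-module of the augmented chain complex $C_\bullet := (\Sigma_{I,\bullet}M \to M \to 0)$, and to prove that each $H_k$ is a finitely generated torsion $\A$-module. Granted both, Lemma \ref{torsion_vanish} forces $H_k(V) = 0$ for $\rk V$ sufficiently large, which is exactly the exactness of $C_\bullet(V)$.

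Finite generation of each $H_k$ is immediate. By the preceding lemma every $\Sigma_{I,n}M$ is a finitely generated $\A$-module, and the differentials of $C_\bullet$ are natural transformations; Theorem \ref{OrI_noetherian} then guarantees that the kernels and images of these differentials are finitely generated, and hence so are the subquotients $H_k$.

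The core of the proof is to show that every class $[z] \in H_k(V)$ is torsion, i.e.\ that some $\A$-morphism $V \to W$ annihilates $[z]$. To achieve this I would use the stabilization map $\rho_m: V \to V \oplus \XX_I^{m}$ for large $m$ and construct on $C_\bullet(V \oplus \XX_I^{m})$ a semi-simplicial style contracting homotopy $s_n: \Sigma_{I,n}M(V\oplus \XX_I^m) \to \Sigma_{I,n+1}M(V\oplus \XX_I^m)$. On the summand indexed by an isometric embedding $h: \XX_I^n \hookrightarrow V \oplus \XX_I^m$ whose image is orthogonal to at least one of the new $\XX_I$-factors, the map $s_n$ is defined by appending that new factor to $h$; the standard simplicial identity $d s_n + s_{n-1} d = \mathrm{id}$ then holds on such summands. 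Once $m$ is sufficiently large compared to $n$, every isometric embedding $\XX_I^n \hookrightarrow V \oplus \XX_I^m$ admits such a free $\XX_I$-factor in its complement (using the decomposition guaranteed by Proposition \ref{bilformlocalrings} and complementation in $\A$), so the homotopy covers the whole module. This yields $(\rho_m)_{\ast}[z] = 0$ in $H_k(V \oplus \XX_I^m)$, showing $[z]$ is torsion.

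The chief technical obstacle is the bookkeeping in the last step: one must match the signs and indices of the differentials $d_i$ (restriction of $h$ to a sub-tuple plus inclusion of complements $C_h \hookrightarrow C_{h\circ\iota_i}$) with those of the insertion homotopy $s_n$, and verify that Lemma \ref{transitive} permits the reduction to a single normalized ``distinguished'' new $\XX_I$-factor up to an automorphism of $V \oplus \XX_I^m$. This is the direct analogue of the shift-and-contract argument used in \cite{Putman_2017} for $\mathbf{VIC}$- and $\mathbf{SI}$-modules, and should go through once the orthogonal-geometric complementation data is handled carefully; surjective stability (Theorem \ref{surj}) provides the compatibility needed to pass from the summand-level identity to a genuine homotopy on $C_\bullet(V \oplus \XX_I^m)$.
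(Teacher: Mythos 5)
Your overall strategy coincides with the paper's: both arguments show that the homology of the augmented complex is a (finitely generated) torsion $\A$-module and then invoke Lemma \ref{torsion_vanish}, with the torsion-ness supplied by a chain-level null-homotopy of a stabilization map; the paper delegates exactly this step to the argument of Lemma 3.11 of \cite{Putman_2017}. Your finite-generation observation via Noetherianity is correct and matches what the paper uses implicitly. The problem is in the one step you try to make explicit.

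As written, the homotopy step fails. First, the claim that for $m\gg n$ \emph{every} isometric embedding $h:\XX_I^n\hookrightarrow V\oplus\XX_I^m$ has image orthogonal to one of the new coordinate $\XX_I$-factors is false: $h$ can send a generator to a vector with nonzero projection onto each of the $m$ new factors, no matter how large $m$ is. So $s_n$ is not defined on all of $\Sigma_{I,n}M(V\oplus\XX_I^m)$ and cannot ``cover the whole module.'' Second, even where it is defined, the identity you want is not $ds_n+s_{n-1}d=\mathrm{id}$ --- that would be a contracting homotopy of $C_\bullet(V\oplus\XX_I^m)$, i.e., exactness there outright, which is not available --- but rather $ds+sd=(\rho_m)_*$, a null-homotopy of the stabilization chain map. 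The repair is the standard one: take $m=1$ and define $s$ only on the image of the stabilization map $C_\bullet(V)\to C_\bullet(V\oplus\XX_I)$. Every summand $M(C_h)$ occurring in that image is indexed by an $h$ factoring through $V$, whose image is therefore automatically orthogonal to the single distinguished new $\XX_I$-factor; appending that fixed factor as the last coordinate gives $s$ with $ds+sd=(\rho_1)_*$ (no choices, so no appeal to Theorem \ref{surj} is needed). This yields $(\rho_1)_*[z]=0$ for every homology class, i.e., the homology equals its torsion submodule, and Lemma \ref{torsion_vanish} then finishes the proof as in the paper.
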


\begin{proof}
It suffices to prove that $(\HH_i(\Sigma_{I, \bullet}M))(V) = (\HH_i(\Sigma_{I, \bullet}M)_T)(V)$, since then it follows from Lemma \ref{torsion_vanish} that $(\HH_i(\Sigma_{\bullet}M))(V) = 0$ for sufficiently large $\rk V$, which implies the exactness of the chain complex. To prove the claimed fact, we only need to show that the map $(\HH_i(\Sigma_{I, \bullet}M))(V) \to (\HH_i(\Sigma_{I, \bullet}M))(V\oplus \XX_{I})$, induced by the canonical morphism $V\to V\oplus \XX_I$, is the zero map. But then the argument in Lemma 3.11 of \cite{Putman_2017} applies verbatim.
\end{proof}

Now, we are ready to show central stability. The argument closely follows the proof of Theorem F in \cite{Putman_2017}. The main modification is the use of the chain complex $\Sigma_{I,n} M$, which is parametrized by $I$ to handle the various isomorphism classes of orthogonal modules for any given rank.

\begin{thm}[Central stability]
\label{central}
Let $\A_{\le N}$ denote the full subcategory spanned by the following objects:
\begin{itemize}
    \item objects with $R$-rank at most $N-1$;
    \item objects isometric to the rank-$N$ orthogonal $R$-module equipped with the bilinear form whose matrix representation under the standard basis is the identity.
\end{itemize} 
Let $M$ be a finitely generated $\A$-module. Then for all $N$ sufficiently large, $M$ is the left Kan extension to $M\mid_{\A_{\le N}}$ along the inclusion functor $p: \A_{\le N} \to \A$.
\end{thm}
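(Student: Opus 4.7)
The plan is to mimic the proof of Theorem F in \cite{Putman_2017}, using the family of chain complexes $\{\Sigma_{I,\bullet}M\}_{I\subseteq\{1,\dots,q\}}$ in place of the single complex available in the general-linear or symplectic setting; the whole family is needed to account for the $2^q$ isomorphism classes of rank-$n$ orthogonal $R$-modules simultaneously. First, using the previous theorem together with the fact that only finitely many subsets $I$ appear, I would fix $N$ large enough that for every such $I$ and every $V\in\A$ with $\rk V\ge N$ the augmented complex
\[\Sigma_{I,2}M(V) \xrightarrow{d_1} \Sigma_{I,1}M(V) \xrightarrow{d_0} M(V) \to 0\]
is exact.

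Let $p:\A_{\le N}\hookrightarrow\A$ denote the inclusion, set $L := p_!(M|_{\A_{\le N}})$ for the left Kan extension, and let $\eta:L\to M$ be the canonical comparison map of $\A$-modules. Since $p$ is fully faithful, $\eta_V$ is already an isomorphism for $V\in\A_{\le N}$. I would then prove $\eta_V$ is an isomorphism for all $V\in\A$ by strong induction on $\rk V$. For the inductive step at $V\notin\A_{\le N}$ (so either $\rk V>N$, or $\rk V=N$ with non-identity form), choose some $I\subseteq\{1,\dots,q\}$ so that $\XX_I$ is realized as an orthogonal direct summand of $V$; this is possible by Corollary \ref{finite_ring_decomp} together with Proposition \ref{bilformlocalrings}. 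Every complement $C_h$ appearing in $\Sigma_{I,k}M(V)$ for $k=1,2$ then satisfies $\rk C_h<\rk V$, and so by the inductive hypothesis $\eta$ induces isomorphisms $\Sigma_{I,k}L(V)\xrightarrow{\cong}\Sigma_{I,k}M(V)$ for $k=1,2$.

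Naturality of $\Sigma_{I,\bullet}$ in $\eta$ then produces the commutative ladder
\[
\begin{array}{ccccccc}
\Sigma_{I,2}L(V) & \to & \Sigma_{I,1}L(V) & \to & L(V) & \to & 0 \\
\downarrow\cong & & \downarrow\cong & & \downarrow\eta_V & & \\
\Sigma_{I,2}M(V) & \to & \Sigma_{I,1}M(V) & \to & M(V) & \to & 0,
\end{array}
\]
whose bottom row is exact by our choice of $N$. A short diagram chase then forces $\eta_V$ to be an isomorphism, provided the top row is right-exact at $L(V)$.

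Establishing this right-exactness of the top row is the main obstacle I anticipate. Conceptually, it asserts that every element of $L(V)$ is pulled back from $L(C_h)=M(C_h)$ for some $h:\XX_I\hookrightarrow V$, with relations coming from $\Sigma_{I,2}L(V)$. The cleanest route is to observe that $L$ is itself a finitely generated $\A$-module, being a quotient of a direct sum of representables indexed by any finite generating set of $M|_{\A_{\le N}}$; consequently the surjective stability result Theorem \ref{surj} applies to $L$ as well and yields surjectivity of the top row for $\rk V$ sufficiently large. Exactness at the middle term is then extracted from the inductive isomorphism on smaller ranks together with local Noetherianity (Theorem \ref{OrI_noetherian}) applied to $\ker d_0^L$, completing the induction and hence the theorem.
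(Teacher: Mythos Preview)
Your skeleton matches the paper's proof: induct on $\rk V$, set up the $\Sigma_{I,\bullet}$ ladder with $I$ chosen according to the isomorphism type of $V$, identify the two left verticals via the inductive hypothesis, and finish with a diagram chase. The one substantive difference is how surjectivity of $\Sigma_{I,1}L(V)\to L(V)$ is obtained. The paper works directly with the pointwise (filtered) colimit formula for $L(V)$, factoring every $U\to V$ with $U\in\A_{\le N}$ through a complement $C_h$ of an embedded $\XX_{I_0}$; you instead invoke Theorem~\ref{surj} for $L$. Your route is cleaner and avoids the colimit bookkeeping, but as written it has an apparent circularity: Theorem~\ref{surj} only gives surjectivity above a threshold that a~priori depends on $L$, while $L$ already depends on $N$, and you need the surjectivity from the very first inductive step ($\rk V=N$, non-identity form). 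The fix is to note that if $N$ exceeds the ranks of a fixed generating set $\{x_i\in M(V_i)\}$ of $M$, then the same $x_i$ generate $L$; hence the surjective-stability bound for $L$ is $\max_i\rk V_i+1$, independent of $N$, and can be arranged to be at most $N-1$ from the start. You should make this explicit.

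Your final sentence, about extracting ``exactness at the middle term'' from local Noetherianity of $\ker d_0^L$, is both unnecessary and not justified as stated. Once the top row is a complex (it is: $\Sigma_{I,\bullet}$ of any $\A$-module is a complex by the same alternating-face formula) and its rightmost map is surjective, the diagram chase already forces $\eta_V$ to be an isomorphism using only exactness of the \emph{bottom} row at $\Sigma_{I,1}M(V)$ and at $M(V)$; no exactness at $\Sigma_{I,1}L(V)$ is required. Drop that sentence.
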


\begin{proof}
Let $M'$ be the desired left Kan extension, then the universal property gives a natural transformation $\phi: M'\to M$ such that $\phi_V: M'(V)\to M(V)$ are isomorphisms for all $V$ with $\rk V\le N-1$. (Here we choose $N$ sufficiently large so that $\Sigma_{I,2}M \to \Sigma_{I,1}M \to M \to 0$ is exact whenever $\rk V \ge N$.) It suffices to prove that $\phi_V$ are isomorphisms for all $V$. Unlike in the proof of Theorem F in \cite{Putman_2017}, the chain complex $\Sigma_{I,n}M$ we choose will depend on the isomorphism class of $V$, which is the key difference from that proof.

Induct on $\rk V$, and we assume $\phi_V$ is an isomorphism for all $\rk V\le r-1$ ($r\ge N+1$). Fix $V$ to be a rank-$r$ object, and without loss of generality $V = \XX_{\varnothing}^{r-1} \oplus \XX_{I_0}$. The natural transformation $\phi$ induces natural transformations $\Sigma_{I,i}M \to \Sigma_{I,i}M'$ for each $I$, and by definition we know the induced maps $\Sigma_{I,i}M(V) \to \Sigma_{I,i}M'(V)$ are isomorphisms for each $i\ge 1$ and $I\subseteq \{1,\dots,q\}$. 

Consider the commutative diagram
\[
\begin{tikzcd}
\Sigma_{I_0,2}M'(V) \arrow[r] \arrow[d, "\cong"] & \Sigma_{I_0,1}M'(V) \arrow[r] \arrow[d, "\cong"] & M'(V) \arrow[r] \arrow[d, "\phi_V"] & 0 \\
\Sigma_{I_0,2}M(V) \arrow[r] & \Sigma_{I_0,1}M(V) \arrow[r] & M(V) \arrow[r] & 0, \tag{1}
\end{tikzcd}
\]
whose bottom row is exact. We now wish to prove that the map $\Sigma_{I_0, 1}M'(V) \to M'(V)$ is surjective, from which it will follow that $\phi_V$ is an isomorphism by a simple diagram chase on (1).

By definition of the Kan extension, for $V\in\A$, $M'(V)$ is the colimit
\[\varinjlim ((p\downarrow V) \to \A_{\le N} \xrightarrow{M} \mathbf{Mod}_{\KK}).\]
Furthermore, it is easy to see that is a filtered colimit (due to our definition of $\A_{\le N}$), so as a set it is explicitly given by
\[M'(V) = \left(\coprod_{\substack{(U,f): U\xrightarrow{f}V \\ U\in \A_{\le N}}} M(U)\right) \Bigg/ \sim\]
where $\sim$ is the usual equivalence relation.
By Corollary \ref{morphism_factor}, any map $f:U\to V$ from an object $U$ of rank at most $N$ to $V$ factors through an object $W$ of rank $r-1$. Furthermore, we can suppose $W = C_h$, where $h:\XX_{I_0} \to V$ and $C_h$ is the complement of the image of $h$ in $V$. The inclusion $W\to V$ induces a map $u: M'(W) \to M'(V)$. Define another map $u': M'(W) \to M'(V)$ by the universal property of $M'(W)$ as a filtered colimit. It suffices to show that $u$ and $u'$ are the same map, since then it would imply the surjectivity of $\Sigma_{I_0, 1}M'(V) \to M'(V)$. 

To do this, consider the objects $\{M'(U): U\in \A_{\le N}, f: U\to V\}$. These, along with the morphisms $M'(U)\to M'(U')$ defined by
\[M'(U)\xrightarrow{\cong} M(U) \to M(U')\xrightarrow{\cong} M'(U'),\]
form another diagram of shape $(p\downarrow W)$ in $\A$. The colimit of this is again $M'(W)$, and the map $u: M'(W)\to M'(V)$ is precisely the (unique) map provided by the universal property. Therefore, we conclude that $u = u'$, which finishes the proof.
\end{proof}

\section{Twisted Homological Stability}

\label{twist}

In this section, we fix $R$ to be a finite field with $\operatorname{char} R > 2$. In this case, there are 2 isomorphism classes of non-degenerate symmetric bilinear forms in each dimension.

Define, for an object $(V,B)\in\OrI(R)$, $O(V,B)$ to be the orthogonal group associated with $(V,B)$ (i.e. the group of $R$-linear isomorphisms $V\to V$ preserving the bilinear form $B$). Then for any morphism $(V,B) \to (W,B')$, there is an induced map $O(V,B)\to O(W,B')$ given by mapping $f$ to $f\oplus \mathrm{id}$, where $\mathrm{id}$ is the identity map on the complement of $(V,B)$ in $(W,B')$.

In this section, we prove the following twisted homological stability theorem:

\begin{thm}
\label{twisted}
Let $R$ be a finite field with $\operatorname{char} R > 2$. Let $M$ be a finitely generated $\OrI(R)$-module over $\KK = \Z/\ell\Z$, where $\ell$ is a prime not equal to $\operatorname{char} R$. Fix any index $k\ge 0$. Consider a morphism $(V,B)\to(W,B')$ in $\OrI(R)$. As explained above, this induces a map on the homologies 
\[H_k(O(V,B);M(V,B)) \to H_k(O(W,B');M(W,B')).\] 
Then this map is an isomorphism for all $V$ with sufficiently large rank $n$.
\end{thm}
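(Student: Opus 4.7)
The plan is to combine the Noetherianity of $\Rep_{\KK}(\OrI(R))$ (Theorem~\ref{OrI_noetherian}), the central stability theorem (Theorem~\ref{central}), and Shapiro's lemma to reduce the problem to untwisted homological stability for finite orthogonal groups, which is Charney's theorem. The argument has two main parts: first establish the result when $M$ is a representable $\OrI(R)$-module, and then bootstrap to arbitrary finitely generated $M$ via a resolution by representables.

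For the representable case, fix $(U, B_0) \in \OrI(R)$ and let $P = P_{\OrI(R), (U, B_0)}$, so that $P(V,B) = \KK[\Hom_{\OrI(R)}((U,B_0), (V,B))]$ is a permutation module for $O(V,B)$. By Lemma~\ref{transitive}, $O(V,B)$ acts transitively on this basis, and the stabilizer of a fixed embedding $\iota: (U,B_0) \hookrightarrow (V,B)$ is identified with the orthogonal group $O(\iota(U)^\perp)$ of its orthogonal complement in $V$. Shapiro's lemma then yields
\[
H_k(O(V,B); P(V,B)) \cong H_k(O(\iota(U)^\perp); \KK),
\]
and the stabilization map along a morphism $(V,B) \to (W,B')$ corresponds to the standard inclusion of orthogonal groups on the complements. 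Since $R$ is a finite field of odd characteristic and $\KK = \Z/\ell\Z$ with $\ell \neq \operatorname{char} R$, Charney's untwisted homological stability theorem applies to conclude this map is an isomorphism once $\rk V$ (hence $\rk \iota(U)^\perp$) is sufficiently large, with a range depending only on $k$ and $\rk U$.

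To pass from representables to general finitely generated $M$, I would resolve $M$ by representables. By Noetherianity combined with Lemma~\ref{fingen_quotient}, there is a presentation $P_1 \to P_0 \to M \to 0$ with $P_0, P_1$ finite direct sums of representables, and iterating (each kernel remains finitely generated by Noetherianity) produces a resolution $P_\bullet \to M$. The hyperhomology spectral sequence with $E^1_{p,q} = H_q(O(V,B); P_p(V,B))$ converges to $H_{p+q}(O(V,B); M(V,B))$; for fixed $k$, only finitely many columns $p \le k$ can contribute in this range, and the representable case gives stability for each of the relevant $E^1$-terms once $\rk V$ is large. A comparison of the spectral sequences at $V$ and $W$ via a five-lemma argument then yields the desired isomorphism on $H_k$.

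The main obstacle will be managing the stability ranges uniformly: each representable $P_p$ contributes its own Charney threshold, and one must pick generators at bounded rank to ensure these ranges can be aggregated into a single uniform range for $M$. Here central stability (Theorem~\ref{central}) plays an essential role, since it bounds the rank of objects at which the generators of the resolution live, making the necessary range estimates depend only on $k$ and on data determined by $M$ in a fixed bounded range. A further subtlety is that the resolution $P_\bullet$ need not be finite in general, so care is required to argue that terms $P_p$ with large $p$ contribute only to homological degrees beyond $k$ and hence can be ignored.
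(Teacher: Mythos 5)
Your skeleton (Shapiro's lemma for representables, then a resolution by representables and the hyperhomology spectral sequence for general $M$, keeping only the columns with $p+q\le k$) is the same as the paper's. But there is a genuine gap: you never confront the fact that over a finite field of odd characteristic there are \emph{two} isometry classes of orthogonal spaces in each rank, and the morphism $(V,B)\to(W,B')$ in the statement is allowed to change the discriminant class. Your reduction ``the stabilization map corresponds to the standard inclusion of orthogonal groups on the complements, so apply untwisted stability'' only makes sense along a single tower such as $X^n\to X^{n+1}$ (with $X=\XX_{\varnothing}$), and even there the complements $\iota(U)^{\perp}$ appearing in Shapiro's lemma lie in the square or the nonsquare family depending on the class of the object $U$ at which each representable summand is based, so one must control the types of \emph{all} the base objects occurring in the resolution, not just their ranks. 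The paper handles this by (a) proving stability separately along the two towers $X^n$ and $X^n\oplus Y$ after replacing $M$ by truncations $M_1,M_2$ satisfying an injectivity condition (i) together with a condition (ii)/(ii\,$'$) forcing every representable in the resolution to be based at objects of a single fixed type, so that all Shapiro complements lie in the one family $X^i$ for which the untwisted input applies (the input is Friedlander's stability for $H_*(O_n(\mathbb{F}_q);\Z/\ell)$, not Charney's theorem --- Charney is what the untwisted corollary is \emph{generalizing}); and then (b) deducing the cross-type case from the interleaving $X^{n-2}\oplus Y\to X^n\to X^n\oplus Y\to X^{n+2}$ by a two-out-of-three argument. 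Without step (b) your argument does not prove the theorem as stated.

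A secondary point: your appeal to central stability (Theorem \ref{central}) to ``bound the rank of the generators of the resolution'' is misplaced; the paper does not use it here, and finite generation plus local Noetherianity already give finitely many representable summands in each $P_q$, hence finitely many thresholds for each fixed $k$. What is actually needed, and what your proposal omits, is the injectivity condition (i) on $M$ (arranged by zeroing out $M$ in low ranks), which is what allows one to propagate the ``generators of a fixed type'' condition to the successive kernels so that the entire resolution consists of representables of the required form.
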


First, we need to state several lemmas.

\begin{lem}
\label{aut}
Let $V,V'\in \OrI(R)$, and let $i: V\to V\oplus V'$ be the canonical injection given by $v\mapsto v\oplus 0$. Then
\[\{\tau \in \Aut(V\oplus V'): \tau i = i \} \cong \Aut(V').\]
\end{lem}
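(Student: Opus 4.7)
The plan is to construct mutually inverse group homomorphisms between the stabilizer $\mathrm{Stab}(i) := \{\tau \in \Aut(V\oplus V') : \tau i = i\}$ and $\Aut(V')$, using the orthogonal complement structure to decompose any such $\tau$.

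First I would set up one direction of the map: given $\phi \in \Aut(V')$, the map $\mathrm{id}_V \oplus \phi$ clearly lies in $\Aut(V\oplus V')$ (it preserves the bilinear form since both summands are preserved and the summands are orthogonal) and fixes $i(V)$ pointwise. This gives a group homomorphism $\Psi : \Aut(V') \to \mathrm{Stab}(i)$, and it is manifestly injective.

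For surjectivity, the key step is to show that any $\tau \in \mathrm{Stab}(i)$ preserves the orthogonal complement $i(V)^{\perp}$, which under the given direct sum decomposition is exactly the copy of $V'$. Concretely, for $v' \in V'$ and any $u \in V$, write $\tau(0, v') = (a(v'), b(v'))$ with $a : V' \to V$ and $b : V' \to V'$ linear. Using that $\tau$ is an isometry and $\tau i = i$, compute
\[
B_V(a(v'), u) = B_{V\oplus V'}(\tau(0,v'), \tau(u,0)) = B_{V\oplus V'}((0,v'),(u,0)) = 0
\]
for every $u \in V$. Nondegeneracy of $B_V$ forces $a(v') = 0$, so $\tau(0, v') = (0, b(v'))$. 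The same identity with both inputs in $V'$ shows $b$ preserves $B_{V'}$, and since $\tau$ is bijective and fixes $i(V)$, so is $b$; hence $b \in \Aut(V')$. Finally $\tau(v, v') = \tau(v, 0) + \tau(0, v') = (v, b(v'))$, which shows $\tau = \Psi(b)$.

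These two assignments are clearly inverse to each other and respect composition (since in the product description, composition acts coordinatewise on the $V'$-component), yielding the desired group isomorphism. The only delicate point in the argument is the orthogonality calculation above; everything else is bookkeeping using the direct-sum decomposition $V \oplus V'$ given by the complemented structure of $\OrI(R)$.
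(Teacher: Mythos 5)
Your proof is correct, but it takes a different route from the paper. The paper does not argue directly: it simply invokes Lemma~3.2 of Putman--Sam, which establishes the statement abstractly for any complemented category (the stabilizer of the canonical morphism $V \to V \otimes V'$ is the automorphism group of the complement), and notes that $\OrI(R)$ is complemented. Your argument instead works concretely with the bilinear form: the map $\Psi(\phi) = \mathrm{id}_V \oplus \phi$ gives the easy injection, and the orthogonality computation
\[
B_V(a(v'), u) = B_{V\oplus V'}(\tau(0,v'), \tau(u,0)) = B_{V\oplus V'}((0,v'),(u,0)) = 0,
\]
combined with nondegeneracy of $B_V$, forces $\tau$ to preserve $0 \oplus V'$, giving surjectivity. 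This is a complete and self-contained proof; the only point worth stating slightly more carefully is why $b$ is bijective (surjectivity of $b$ follows because $\tau$ is surjective and maps $0\oplus V'$ into itself while restricting to the identity on $V\oplus 0$, and injectivity is then immediate since $\tau(0,v')=(0,b(v'))$). What your approach buys is explicitness and independence from the external reference; what the paper's approach buys is brevity and the fact that the identical statement is already needed, in the same categorical generality, elsewhere in the Putman--Sam framework the paper is building on.
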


\begin{proof}
See the proof of \cite{Putman_2017}, Lemma 3.2. Because $\OrI(R)$ is a complemented category, the conclusion there applies to $\OrI(R)$.
\end{proof}

\begin{lem}
\label{induced}
Let $V, W\in\OrI(R)$, then, as $\Aut(V)$-representations,
\[\KK[\Hom(V, V\oplus W)] \cong \KK[\Aut(V\oplus W)/\Aut(W)] \cong \Ind_{\Aut(W)}^{\Aut(V\oplus W)} (\KK).\]
\end{lem}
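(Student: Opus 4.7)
The plan is to obtain both isomorphisms by recognizing $\Hom(V, V\oplus W)$ as a transitive $\Aut(V\oplus W)$-set, identifying the point stabilizer with $\Aut(W)$, and then invoking the standard identification of a coset permutation module with an induced representation. Concretely, let $i: V \hookrightarrow V\oplus W$ denote the canonical inclusion $v \mapsto (v,0)$. Since $\OrI(R)$ is complemented, Lemma \ref{transitive} (applied with the codomain $V\oplus W$) shows that $\Aut(V\oplus W)$ acts transitively on $\Hom(V, V\oplus W)$ by post-composition. By Lemma \ref{aut}, the stabilizer of $i$ under this action is precisely the subgroup $\{\id_V \oplus \alpha : \alpha \in \Aut(W)\} \cong \Aut(W)$. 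Orbit-stabilizer therefore gives a bijection
\[
\Aut(V\oplus W)/\Aut(W) \longrightarrow \Hom(V, V\oplus W), \qquad \tau \Aut(W) \longmapsto \tau \circ i,
\]
which is equivariant for left multiplication on the left-hand side; applying $\KK[-]$ yields the first isomorphism of $\KK[\Aut(V\oplus W)]$-modules, and in particular of $\KK[\Aut(V)]$-modules once we restrict along the standard embedding $\Aut(V) \hookrightarrow \Aut(V\oplus W)$, $\sigma \mapsto \sigma \oplus \id_W$.

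For the second isomorphism, I would invoke the classical fact that for any group $G$ and subgroup $H \le G$, the coset permutation module $\KK[G/H]$ is canonically isomorphic to $\Ind_H^G \KK$ via $gH \mapsto g \otimes 1$, where $\KK$ denotes the trivial $\KK H$-module. Taking $G = \Aut(V\oplus W)$ and $H = \Aut(W)$ then completes the proof. I do not anticipate any real obstacle: Lemmas \ref{transitive} and \ref{aut} package all the geometric content, so what remains is purely formal group-theory bookkeeping, and the only thing worth double-checking is that post-composition on $\Hom(V, V\oplus W)$ matches left multiplication on cosets under our identifications, which is immediate from the definitions.
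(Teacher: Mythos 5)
Your proof is correct and follows exactly the same route as the paper's: transitivity of the $\Aut(V\oplus W)$-action from Lemma \ref{transitive}, identification of the stabilizer with $\Aut(W)$ from Lemma \ref{aut}, and the standard identification of the coset permutation module with the induced representation. Your added remark about restricting along $\Aut(V)\hookrightarrow\Aut(V\oplus W)$ is a reasonable reading of the (slightly loosely stated) ``as $\Aut(V)$-representations'' in the lemma.
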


\begin{proof}
By Lemma $\ref{transitive}$, $\Aut(V\oplus W)$ acts transitively on $\Hom(V,V\oplus W)$. By Lemma $\ref{aut}$, the stabilizer of any element $i\in \Hom(V,V\oplus W)$ is isomorphic to $\Aut(W)$, which proves the first equality. The second equality follows by definition of an induced representation.
\end{proof}

\begin{cor} 
\label{shapirocor}
For any $V,W\in\OrI(R)$,
\[H_k(\Aut(V\oplus W); \KK[\Hom(V, V\oplus W)]) \cong H_k(\Aut(W); \KK).\]
\end{cor}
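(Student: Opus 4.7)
The plan is a one-shot application of Shapiro's lemma, with the coefficient module already having been identified in Lemma \ref{induced}. By that lemma, $\KK[\Hom(V, V \oplus W)]$ is isomorphic, as an $\Aut(V \oplus W)$-module, to the induced representation $\Ind_{\Aut(W)}^{\Aut(V \oplus W)}(\KK)$, where $\Aut(W)$ is embedded in $\Aut(V \oplus W)$ as the stabilizer of the canonical injection $i: V \to V \oplus W$ (which is the content of Lemma \ref{aut}). The transitivity of the $\Aut(V \oplus W)$-action on $\Hom(V, V \oplus W)$, established in Lemma \ref{transitive}, is what makes this identification an isomorphism rather than merely an injection.

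Substituting this description into the left-hand side of the corollary rewrites it as $H_k\bigl(\Aut(V \oplus W); \Ind_{\Aut(W)}^{\Aut(V \oplus W)}(\KK)\bigr)$. Shapiro's lemma, the classical result stating that $H_k(G; \Ind_H^G M) \cong H_k(H; M)$ for any subgroup $H \le G$ and $H$-module $M$, then immediately produces $H_k(\Aut(W); \KK)$, which is exactly the right-hand side.

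Since both Lemma \ref{induced} and Shapiro's lemma are in hand, the argument is essentially a two-line citation chain and presents no technical obstacle. The one small sanity check worth performing is that the $\Aut(V \oplus W)$-equivariance in the statement of Lemma \ref{induced} is compatible with the post-composition action implicit in the coefficient system on the left-hand side of the corollary; this is built into the construction of the bijection $\Hom(V, V \oplus W) \cong \Aut(V \oplus W)/\Aut(W)$ and requires no further verification.
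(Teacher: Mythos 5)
Your argument is correct and is exactly the paper's proof: identify $\KK[\Hom(V, V\oplus W)]$ with $\Ind_{\Aut(W)}^{\Aut(V\oplus W)}(\KK)$ via Lemma \ref{induced} and then apply Shapiro's lemma for the subgroup $\Aut(W)\le \Aut(V\oplus W)$. No differences worth noting.
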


\begin{proof}
We have
\begin{align*}
H_k(\Aut(W); \KK) &\cong H_k(\Aut(V\oplus W); \Ind_{\Aut(W)}^{\Aut(V\oplus W)}(\KK)) \\ &\cong H_k(\Aut(V\oplus W); \KK[\Hom(V,V\oplus W)]).
\end{align*}
where the first step uses Shapiro's lemma for group cohomology ($\Aut(W)\le \Aut(V\oplus W)$ by acting trivially on $V$), and the second step uses Lemma \ref{induced}).
\end{proof}

Now, we prove the main theorem of this section.  

\begin{proof}[Proof of Theorem \ref{twisted}]
Let $X,Y$ respectively denote the square and nonsquare generator of $\OrI(R)$; in the notation of $\mathsection$\ref{asymp}, they would be $X = \XX_{\varnothing}$ and $Y = \XX_{\{1\}}$. Also, fix an isomorphism $X^2\cong Y^2$

First, we consider the case where we impose the following two extra conditions on $M$:
\begin{enumerate}[(i)]
    \item For any morphism $f: V\to W$ in $\OrI(R)$, the map $M(f): M(V)\to M(W)$ is injective;
    \item There exists a set of generators of $M$ such that each generator lies in $M(V)$, where $V \cong X^r \oplus Y$ for some $r\ge 0$ (possibly depending on the generator).
\end{enumerate}

In this case, we will show that for any fixed $k$, the map 
\[H_k(\Aut(X^n\oplus Y); M(X^n\oplus Y)) \to H_k(\Aut(X^{n+1}\oplus Y); M(X^{n+1}\oplus Y)) \tag{2}\]
is an isomorphism for all $n\gg 0$.

Because $M$ is finitely generated, by Lemma \ref{fingen_quotient}, $M$ is a quotient module of a projective module $P_0$, which is the direct sum of finitely many representable $\OrI(R)$-modules. Furthermore, because of condition (ii), we can choose $P_0$ so that it is the direct sum of representable functors based at objects isomorphic to those of the form $X^r\oplus Y$. The kernel of $P_0 \to M$ is finitely generated by local Noetherianity. Consider the following commutative diagram, where $f:V\to W$ is a morphism in $\OrI(R)$:
\[
\begin{tikzcd}
P_0(V) \arrow[r, "P_0(f)"] \arrow[d] & P_0(W) \arrow[d] \\
M(V) \arrow[r, "M(f)"] & M(W).
\end{tikzcd}
\]
Because $M(f)$ is injective, we see that if $v\in P_0(V)$ and $P_0(f)(v) \in \ker(P_0(W)\to M(W))$, then $v\in \ker(P_0(V)\to M(V))$. From this, we deduce that the kernel $\ker(P_0\to M)$ must also satisfy its analogous properties (i) and (ii).

Therefore, we can repeat this process to extend this to a projective resolution $\bar{C}$ of $M$ by direct sums of finitely many representable $\OrI(R)$-modules based at objects isomorphic to $X^r\oplus Y$:
\[\bar{C}: \dots \to P_2 \to P_1 \to P_0 \to M \to 0.\]

Now, if we delete $M$ from the chain complex $\bar{C}$ to produce
\[C: \dots \to P_2 \to P_1 \to P_0 \to 0,\]
then for $V$ an object in $\OrI(R)$,
\[H_{k}(\Aut(V); M(V)) \cong H_{k}(\Aut(V); C),\]
where the right hand side is group homology with coefficients in a chain complex.
There exists a spectral sequence (cf. \cite{browncohomology}):
\[E_{pq}^1 = H_p(\Aut(V); P_q(V)) \Longrightarrow H_{p+q}(\Aut(V); C). \tag{3}\]

Because each $P_q$ is the direct sum of representable modules based at objects isomorphic to some $X^r \oplus Y$, we see (using Corollary \ref{shapirocor}) that for any fixed $p,q,n$, there is a map
\begin{align*}
\bigoplus_i H_p(\Aut(X^i); \KK) &\cong H_p(\Aut(X^n\oplus Y); P_q(X^n\oplus Y)) \\ &\to H_p(\Aut(X^{n+1}\oplus Y); P_q(X^{n+1}\oplus Y)) \cong \bigoplus_i H_p(\Aut(X^{1+i}); \KK).
\end{align*}

From \cite{Friedlander1976}, each map $H_p(\Aut(X^i);\KK) \to H_p(\Aut(X^{1+i});\KK)$
is an isomorphism for all $i \gg 0$. Hence, we conclude that
\[H_p(\Aut(X^n\oplus Y); P_q(X^n\oplus Y)) \xrightarrow{\cong} H_p(\Aut(X^{n+1}\oplus Y); P_q(X^{n+1}\oplus Y))\]
for all $n \gg 0$. Applying the spectral sequence (3), we conclude that for any fixed $k = p+q$, the map (2) is indeed an isomorphism for all $n\gg 0$.

Next, we consider the second case where $M$ satisfies (i) and the following condition (ii'), instead of (ii):
\begin{itemize}
    \item[(ii')] There exists a set of generators of $M$ such that each generator lies in $M(V)$, where $V \cong X^r$ for some $r\ge 0$ (possibly depending on the generator).
\end{itemize}

Using the exact same argument, we can similarly show that for any fixed $k$, the map 
\[H_k(\Aut(X^n); M(X^n)) \to H_k(\Aut(X^{n+1}); M(X^{n+1})) \tag{4}\]
is an isomorphism for all $n\gg 0$. 

We now consider the general case where $M$ is any finitely generated $\OrI(R)$-module. Fix a large enough positive integer $N$ according to Theorem \ref{surj} and Corollary \ref{inj}, such that injective and surjective stability holds for all objects with rank at least $N$. Let $M_1$ be the submodule of $M$ given by 
\[M_1(V) =
\begin{cases}
M(V) & \text{if either } \rk V > N \text{ or } V\cong X^{N-1}\oplus Y; \\
0 & \text{otherwise,}
\end{cases}
\]
and for any morphism $f: V\to W$, $M_1(f)$ is the zero map when $M_1(V) = 0$, and is the same as $M(f)$ otherwise. Then by local Noetherianity, $M_1$ is a finitely generated module satisfying conditions (i) and (ii), and $M_1(V)\cong M(V)$ for all $V$ of large enough rank. As a result,
\begin{align*}
    H_k(\Aut(X^n \oplus Y); M(X^n \oplus Y)) &= H_k(\Aut(X^n \oplus Y); M_1(X^n \oplus Y)) \\
    &\cong H_k(\Aut(X^{n+1} \oplus Y); M_1(X^{n+1} \oplus Y)) \\
    &= H_k(\Aut(X^{n+1} \oplus Y); M(X^{n+1} \oplus Y))
\end{align*}
is an isomorphism for all $n\gg 0$.

Similarly, let $M_2$ be the submodule of $M$ given by 
\[M_2(V) =
\begin{cases}
M(V) & \text{if either } \rk V > N \text{ or } V\cong X^{N}; \\
0 & \text{otherwise,}
\end{cases}
\]
and for any morphism $f: V\to W$, $M_2(f)$ is the zero map when $M_2(V) = 0$, and is the same as $M(f)$ otherwise. Then by local Noetherianity, $M_2$ is a finitely generated module satisfying conditions (i) and (ii'), and $M_2(V)\cong M(V)$ for all $V$ of large enough rank. As a result,
\begin{align*}
    H_k(\Aut(X^n); M(X^n)) &= H_k(\Aut(X^n); M_2(X^n)) \\
    &\cong H_k(\Aut(X^{n+1}); M_2(X^{n+1})) \\
    &= H_k(\Aut(X^{n+1}); M(X^{n+1}))
\end{align*}
is an isomorphism for all $n\gg 0$.

Finally, because the composition of isometries $X^{n-2}\oplus Y \to X^n \to X^{n}\oplus Y \to X^{n+2}$, given by the inclusions and isomorphisms
\[X^{n-2}\oplus Y \to X^{n-2}\oplus Y^2 \cong X^n \to X^{n}\oplus Y \to X^n \oplus Y^2 \cong X^{n+2}\]
induces the maps
\[\Aut(X^{n-2}\oplus Y) \to \Aut(X^{n})\to \Aut(X^{n}\oplus Y) \to \Aut(X^{n+2}),\]
which induces the maps
\begin{align*}
H_k(\Aut(X^{n-2}\oplus Y); M(X^{n-2}\oplus Y)) &\xrightarrow{f} H_k(\Aut(X^n); M(X^n)) \\ &\xrightarrow{g} H_k(\Aut(X^{n}\oplus Y); M(X^{n}\oplus Y)) \\ & \xrightarrow{h} H_k(\Aut(X^{n+2}); M(X^{n+2})),
\end{align*}
and since $gf, hg$ are both isomorphisms, we conclude that $g$ is an isomorphism as well. Because any object $(V,B) \in \OrI(R)$ is isomorphic to either $X^n$ or $X^n\oplus Y$ ($n\ge 0$), these are enough to imply Theorem \ref{twisted}.
\end{proof}

As a corollary, we show the following homological stability result with untwisted coefficients:

\begin{cor}
Under assumptions stated in Theorem \ref{twisted}, the map \[H_k(O(V,B);\KK) \to H_k(O(W,B');\KK)\] is an isomorphism for all $V$ with sufficiently large rank $n$.
\end{cor}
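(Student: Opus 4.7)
The plan is to deduce this corollary from Theorem \ref{twisted} by applying it to the constant $\OrI(R)$-module $\underline{\KK}$, defined on objects by $\underline{\KK}(V,B) = \KK$ and on morphisms by $\underline{\KK}(f) = \mathrm{id}_{\KK}$. The key observation is that every automorphism $\phi \in O(V,B)$ acts as the identity on $\underline{\KK}(V,B) = \KK$, so the coefficient representation is trivial and $H_k(O(V,B); \underline{\KK}(V,B)) = H_k(O(V,B); \KK)$ is precisely the untwisted group homology. Thus once $\underline{\KK}$ is shown to be finitely generated, the conclusion of Theorem \ref{twisted} applied to $M = \underline{\KK}$ becomes exactly the desired isomorphism $H_k(O(V,B); \KK) \to H_k(O(W,B'); \KK)$ for $\rk V$ sufficiently large.

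The one step requiring care is the verification that $\underline{\KK}$ is finitely generated. I would exhibit a single generator at the zero orthogonal module $(0,0)$, which is a valid object of $\OrI(R)$ because the rank-$0$ free $R$-module carries a unique symmetric bilinear form, vacuously non-degenerate. For each $(V,B)$ the zero linear map is the unique isometry $(0,0) \to (V,B)$, and under $\underline{\KK}$ it acts as $\mathrm{id}_{\KK}$. Hence any submodule $N \subseteq \underline{\KK}$ with $1 \in N(0,0)$ must contain $1 \in N(V,B)$ for every $(V,B)$, forcing $N = \underline{\KK}$; so $\underline{\KK}$ is generated by one element. If one prefers to avoid the zero object, the same argument runs using two generators placed at rank-$1$ representatives of the two isometry classes, since every orthogonal module of positive rank receives an isometry from one of them, and a rank-$2$ trivial orthogonal sum handles the other class by the isomorphism $X^2 \cong Y^2$ used in the proof of Theorem \ref{twisted}.

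There is essentially no real obstacle in this argument; it is a direct instance of the standard principle of the $\Cl$-module framework, in which once a homological stability theorem has been proved with arbitrary finitely generated coefficient modules, the corresponding untwisted stability statement follows automatically by specializing to the constant functor.
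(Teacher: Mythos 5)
Your proof is correct, but it takes a genuinely different (and more economical) route than the paper. You specialize Theorem \ref{twisted} to the constant module $\underline{\KK}$, which is indeed a finitely generated $\OrI(R)$-module: your generator at the zero orthogonal module works because that object is the monoidal identity and hence initial in the complemented category $\OrI(R)$, so every object receives a morphism from it carrying $1$ to $1$. (In your alternative with generators only at the two rank-one classes you would still need a generator in rank $0$ to catch the value at the initial object, but this is immaterial.) Since every automorphism acts trivially on the constant coefficients, the conclusion of Theorem \ref{twisted} for $M = \underline{\KK}$ is literally the statement of the corollary. The paper instead applies Theorem \ref{twisted} to the representable modules $P_{\OrI(R), X^m}$ for $m$ large and uses Shapiro's lemma (Corollary \ref{shapirocor}) to rewrite $H_k(\Aut(V\oplus X^m); \KK[\Hom(X^m, V\oplus X^m)])$ as $H_k(\Aut(V);\KK)$. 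That detour buys something your argument does not: the rank hypothesis of Theorem \ref{twisted} is absorbed into the auxiliary parameter $m$ rather than into $V$, so the paper's argument compares $H_k(\Aut(V);\KK)$ with $H_k(\Aut(V\oplus X);\KK)$ and $H_k(\Aut(V\oplus Y);\KK)$ for every object $V$, not only for $V$ of large rank, and it exhibits the ``reverse implication'' (twisted stability with representable coefficients recovering untwisted stability of the smaller groups) that the introduction advertises as the link to Charney's theorem. For the corollary exactly as stated, which only asserts an isomorphism for $V$ of sufficiently large rank, your specialization is a complete proof.
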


\begin{proof}
Take $m$ sufficiently large. Consider the finitely generated module $P_{\OrI(R), X^m}$. Then for any object $V\in\OrI(R)$,
\begin{align*}
    H_k(\Aut(V); \KK) &= H_k(\Aut(V\oplus X^m); \KK[\Hom(X^m, V\oplus X^m)]) \\
    &\cong H_k(\Aut(A\oplus X^{m+1}); \KK[\Hom(X^m, V\oplus X^{m+1})]) \\
    &= H_k(\Aut(V\oplus X); \KK),
\end{align*}
where the first and third steps follow from \ref{shapirocor}, and the second step follows from Theorem \ref{twisted} (recall we took $m$ sufficiently large so that this is an isomorphism). Similarly,
\begin{align*}
    H_k(\Aut(V); \KK) &= H_k(\Aut(V\oplus X^m); \KK[\Hom(X^m, V\oplus X^m)]) \\
    &\cong H_k(\Aut(V\oplus X^{m} \oplus Y); \KK[\Hom(X^m, V\oplus X^{m} \oplus Y)]) \\
    &= H_k(\Aut(V\oplus Y); \KK).
\end{align*}
These are enough to imply the conclusion.
\end{proof}

We remark that this partially generalizes the main theorem in \cite{charney1987} under our assumptions for $R$ and $\KK$.

\printbibliography

\end{document}